\numberwithin{equation}{section}
\numberwithin{figure}{section}
\newcommand{\lyxaddress}[1]{
\par {\raggedright #1
\vspace{1.4em}
\noindent\par}
}
\theoremstyle{plain}
\newtheorem{thm}{\protect\theoremname}
  \theoremstyle{plain}
  \newtheorem{lem}[thm]{\protect\lemmaname}
  \theoremstyle{remark}
  \newtheorem{rem}[thm]{\protect\remarkname}
  \theoremstyle{plain}
  \newtheorem{cor}[thm]{\protect\corollaryname}
  \theoremstyle{plain}
  \newtheorem{prop}[thm]{\protect\propositionname}
  \theoremstyle{definition}
  \newtheorem{example}[thm]{\protect\examplename}
\def\Big#1{\makebox(-5,5){\Large$#1$}}
\definecolor{lightgray}{gray}{.60}
  \providecommand{\corollaryname}{Corollary}
  \providecommand{\examplename}{Example}
  \providecommand{\lemmaname}{Lemma}
  \providecommand{\remarkname}{Remark}
\providecommand{\theoremname}{Theorem}
\providecommand{\propositionname}{Proposition}
\begin{document}
\global\long\def\diag{\text{{\rm diag}}}
\global\long\def\Mod{\text{{\rm mod-}}}
\global\long\def\Rep{\text{{\rm rep-}}}
\global\long\def\im{\text{{\rm Im}}}
\global\long\def\dprime{\prime\prime}
\global\long\def\Ext{\text{{\rm Ext}}}
\global\long\def\Dim{\underline{\dim}}
\global\long\def\Hom{\text{{\rm Hom}}}
\global\long\def\rank{\text{{\rm rank}}}
\global\long\def\End{\text{{\rm End}}}

\title{Preinjective subfactors of preinjective Kronecker modules}

\author{Csaba Szántó, István Szöll\H{o}si}

\maketitle

\lyxaddress{Faculty of Mathematics and Computer Science, Babe\c{s}-Bolyai University}

\lyxaddress{str. M. Kog\u{a}lniceanu, nr. 1, 400084, Cluj-Napoca, Romania}
\begin{abstract}
Using a representation theoretical approach we give an explicit numerical
characterization in terms of Kronecker invariants of the subfactor
relation between two preinjective (and dually preprojective) Kronecker
modules, describing explicitly a so called linking module as well. Preinjective
(preprojective) Kronecker modules correspond to matrix pencils having
only minimal indices for columns (respectively for rows). Thus our
result gives a solution to the subpencil problem in these cases (including
the completion), moreover the required computations are straightforward
and can be carried out easily (both for checking the subpencil relation
and constructing the completion pencils based on the linking module).
We showcase our method by carrying out the computations on an explicit
example.
\end{abstract}
\emph{AMS classification}: 16G20, 15A22.\\
\emph{Keywords and phrases}: Kronecker quiver, Kronecker modules and
representations, matrix pencil, subpencil.

\section{Introduction}

Let $K:\xymatrix{1 & 2\ar@<1ex>[l]^{\beta}\ar@<-1ex>[l]_{\alpha}} $
be the \emph{Kronecker quiver}, i.e. the quiver having two vertices
and two parallel arrows and $k$ an arbitrary field. The path
algebra of the Kronecker quiver is the \emph{Kronecker algebra} and
we will denote it by $kK$. A finite dimensional right module over
the Kronecker algebra is called a \emph{Kronecker module}. We denote
by $\Mod kK$ the category of finite dimensional right modules over
the Kronecker algebra. For a module $M\in\Mod kK$, $[M]$ will denote
the isomorphism class of $M$ and $tM:=M\oplus\dots\oplus M$
($t$-times).

A (finite dimensional) $k$-linear representation of the quiver
$K$ is a quadruple $M=(V_{1},V_{2};\varphi_{\alpha},\varphi_{\beta})$
where $V_{1},V_{2}$ are finite dimensional $k$-vector spaces
(corresponding to the vertices) and $\varphi_{\alpha},\varphi_{\beta}:V_{2}\to V_{1}$
are $k$-linear maps (corresponding to the arrows). Thus a $k$-linear
representation of $K$ associates vector spaces to the vertices and
compatible $k$-linear functions (or equivalently, matrices)
to the arrows. A morphism between two representations $M=(V_{1},V_{2};\varphi_{\alpha},\varphi_{\beta})$
and $M^{\prime}=(V_{1}^{\prime},V_{2}^{\prime};\varphi_{\alpha}^{\prime},\varphi_{\beta}^{\prime})$
is a pair of linear maps $(f_{1},f_{2})$, where $f_{1}:V_{1}\to V_{1}^{\prime}$,
$f_{2}:V_{2}\to V_{2}^{\prime}$ and $f_{1}\varphi_{\alpha}=\varphi_{\alpha}^{\prime}f_{2}$,
$f_{1}\varphi_{\beta}=\varphi_{\beta}^{\prime}f_{2}$. Let us denote
by $\Rep kK$ the category of finite dimensional $k$-representations
of the Kronecker quiver. There is a well-known equivalence of categories
between $\Mod kK$ and $\Rep kK$, so that every Kronecker
module can be identified with a representation of $K$.

The indecomposable Kronecker modules are of three types: preprojective, preinjective and regular (see the details in the next section). A Kronecker module is preinjective (preprojective) if all its indecomposable components are preinjective (preprojective).

It is easy to see that Kronecker modules are uniquely determined up to isomorphism by two sequences of nonnegative integers and some partitions (see the details in the next section). These numerical invariants are called \emph{the Kronecker invariants of the module}.

Recall that a module $M^{\prime}$ is a \emph{subfactor} of $M$ if there
exists a module $L$ with a monomorphism $L\to M$ and an epimorphism
$L\to M^{\prime}$ (or equivalently with an epimorphism $M\to L$
and a monomorphism $M^{\prime}\to L$). We will call $L$ a \emph{linking
module}.

We have given in \cite{SzSz} a numerical criterion in terms of Kronecker
invariants for the existence of a monomorphism between two
preinjective Kronecker modules (and dually for the existence of an
epimorphism between two preprojective modules). The criterion is
very simple, it is in fact a weighted dominance relation between the
invariants. Using this criterion one can also obtain the numerical
description of an epimorphism between two preinjective Kronecker
modules (and dually of a monomorphism between two
preprojective modules) (see \cite{Szo3}). The approach used to
obtain the results above is representation theoretical, the methods
are homological combined with the knowledge on the category
$\Mod kK$.

Note that a different criterion was given by Han Yang in \cite{Han},
working over an algebraically closed field. He uses calculation of
ranks of matrices over polynomial rings and a so called
generalization and specialization approach. These matrices appear in
the representations and in the morphisms between representations.

In this paper we give a simple explicit numerical characterization
in terms of Kronecker invariants of the subfactor relation between
two preinjective (and dually preprojective) Kronecker modules and
also describe an explicit linking module.

\medskip

Regarding the Kronecker modules as representations it is obvious
that a Kronecker module corresponds to a pair of matrices of the
same dimension, thus defining a matrix pencil. Some papers dealing
with this connection are the following: \cite{Bong,BeBo,Han,Sza5}.

Recall that a \textit{matrix pencil} over a field $k$ is a matrix
$A+\lambda B$ where $A,B$ are matrices over $k$ of the same size and
$\lambda$ is an indeterminate. Two pencils $A+\lambda B$,
$A^{\prime}+\lambda B^{\prime}$ are \textit{strictly equivalent},
denoted by $A+\lambda B\sim A^{\prime}+\lambda B^{\prime}$, if and
only if there exists invertible, constant ($\lambda$ independent)
matrices $P,Q$ such that $P(A^{\prime}+\lambda
B^{\prime})Q=A+\lambda B$.

Every matrix pencil is strictly equivalent to a canonical diagonal
form, described by the \textit{classical Kronecker invariants}, namely
\textit{the minimal indices for columns, the minimal indices for rows,
the finite elementary divisors }\textit{\emph{and}}\textit{ the infinite
elementary divisors} (see \cite{Gant} for all the details and \cite{Sza5}
for a worked out example).

A pencil $A^{\prime}+\lambda B^{\prime}$ is called \textit{subpencil}
of $A+\lambda B$ if and only if there are pencils $A_{12}+\lambda B_{12}$,
$A_{21}+\lambda B_{21}$, $A_{22}+\lambda B_{22}$ such that
\[
A+\lambda B\sim\left(\begin{array}{cc}
A^{\prime}+\lambda B^{\prime} & A_{12}+\lambda B_{12}\\
A_{21}+\lambda B_{21} & A_{22}+\lambda B_{22}
\end{array}\right).
\]

In this case we also say that the subpencil can be completed to the
bigger pencil. We speak about row completion when
$A_{12},B_{12},A_{22},B_{22}$ are zero matrices and about column
completion when $A_{21},B_{21},A_{22},B_{22}$ are zero.

There is an unsolved challenge in pencil theory with lots of applications
in control theory (problems related to pole placement, non-regular
feedback, dynamic feedback etc. may be formulated in terms of matrix
pencils, for details see \cite{Loi}).

\textbf{Challenge:} If $A+\lambda B$, $A^{\prime}+\lambda B^{\prime}$
are pencils over $\mathbb{C}$, find a necessary and sufficient condition
in terms of their classical Kronecker invariants for $A^{\prime}+\lambda B^{\prime}$
to be a subpencil of $A+\lambda B$. Also construct the completion
pencils $A_{12}+\lambda B_{12}$, $A_{21}+\lambda B_{21}$, $A_{22}+\lambda B_{22}$.
A particular case of the challenge above is when we limit ourselves
to column or row completions.

Han Yang was the first to give a representation theoretical modular
approach to the matrix subpencil problem. He proved that the
subpencil notion corresponds to the subfactor notion on modular
level. Also the Kronecker invariants of a module correspond to the classical Kronecker invariants of the associated pencil. 

Preinjective Kronecker modules correspond to matrix pencils having
only minimal indices for columns (see details in Section
\ref{sec:Kronecker-modules}). This means that our criterion for
monomorphisms between preinjectives in the particular case when the
factor is of the form $tI_0$ (where $I_0$ is the injective simple module)
coincides with the criteria of Baraga\~na-Zaballa \cite{Bar} and
Mondi\'e \cite{Mon} (see also \cite{Cab},\cite{Dod} as a particular
case) for completing by columns a pencil to another one, both of
them having only minimal indices for columns. One should note that
the existence of a monomorphism in general between two preinjective modules
does not have a natural correspondence in pencil theory.

Having in mind all above one can see that our result on the
subfactor relation between two preinjective Kronecker gives a
solution to the subpencil problem in case both pencils have only
minimal indices for columns. The numerical criterion is simple and
the explicitly described linking module corresponds in fact to a
pencil which is obtained from the smaller one by column completion,
the bigger pencil being a row completion of the linking one. Using
the linking pencil one can also easily construct the completion
pencils. We showcase our result by carrying out the computations on
an explicit example.

Dodig and Sto\u si\'c describe in \cite{DoSt} a numerical criterion
in terms of Kronecker invariants for a pencil having only minimal
indices for columns to be a subpencil of a general one. Later Dodig
gives in \cite{Dodi} a numerical criterion for a general pencil to
be the subpencil of a pencil having only minimal indices for
columns. One can see that the case considered by us is a particular
case of these results, however our criterion is completely different
from the one deducible from the papers above. It also seems very
difficult to see the equivalence of the two criteria.

\section{The category of Kronecker modules\label{sec:Kronecker-modules}}

In this section we give some details on the category of Kronecker
modules (regarded as representations). The interested reader should
consult seminal works such as \cite{Ass,Sim,Aus,Rin} for further
details, proofs and explanations.

For a module $M\in\Mod kK$. For two modules $M,M^{\prime}\in\Mod kK$
will denote by $M^{\prime}\hookrightarrow M$ the fact that there is
a monomorphism $M^{\prime}\to M$) and by $M\twoheadrightarrow
M^{\prime}$ the fact that there is an epimorphism $M\to
M^{\prime}$). Thus, $M^{\prime}$ is a subfactor of $M$ if there
exists a linking module $L$ such that $M^{\prime}\hookrightarrow
L\twoheadleftarrow M$ (or equivalently $M^{\prime}\twoheadleftarrow
L\hookrightarrow M$).

The \emph{simple Kronecker module}s (up to isomorphism) are
\begin{eqnarray*}
S_{1}:k\leftleftarrows0 & \mbox{and} & S_{2}:0\leftleftarrows k.
\end{eqnarray*}
 For a Kronecker module $M$ we denote by $\Dim M$ its \emph{dimension}.
The dimension of $M$ is a vector $\Dim
M=(m_{S_{1}}(M),m_{S_{2}}(M))$, where $m_{S_{i}}(M)$ is the number
of factors isomorphic with the simple module $S_{i}$ in a
composition series of $M$, $i=\overline{1,2}$. Regarded as a
representation,
$M:V_{1}\overset{\varphi_{\alpha}}{\underset{\varphi_{\beta}}{\leftleftarrows}}V_{2}$,
we have that $\Dim M=(\dim_{ k}V_{1},\dim_{ k}V_{2})$.

The \emph{defect} of $M\in\Mod k K$ with $\Dim M=(a,b)$ is defined
in the Kronecker case as $\partial M=b-a$.

An indecomposable module $M\in\Mod k K$ is a member in one of the
following three families: preprojectives, regulars and
preinjectives. In what follows we give some details on these
families.

The \emph{preprojective indecomposable Kronecker module}s are
determined up to isomorphism by their dimension vector. For
$n\in\mathbb{N}$ we will denote by $P_{n}$ the indecomposable
preprojective module of dimension $(n+1,n)$. So $P_{0}$ and $P_{1}$
are the projective indecomposable modules ($P_{0}=S_{1}$ being
simple). It is known that (up to isomorphism) $P_{n}=( k^{n+1},
k^{n};f,g)$, where choosing the canonical basis in $ k^{n}$ and $
k^{n+1}$, the matrix of $f: k^{n}\to k^{n+1}$ (respectively of $g:
k^{n}\to k^{n+1}$)
is $\begin{pmatrix}\mathbb I_{n}\\
0
\end{pmatrix}$ (respectively $\begin{pmatrix}0\\
\mathbb I_{n}
\end{pmatrix}$). Thus in this case
\[
P_{n}:\xymatrix{ k^{n+1} & k^{n}\ar@<1ex>[l]^{\left({\mathbb
I_{n}\atop 0}\right)}\ar@<-0.8ex>[l]_{\left({0\atop \mathbb
I_{n}}\right)}} ,
\]
 where $\mathbb I_{n}$ is the $n\times n$ identity matrix. We have for the
defect $\partial P_{n}=-1$.

We define a \emph{preprojective Kronecker module} $P$ as being a
direct sum of indecomposable preprojective modules: $P=P_{a_{1}}\oplus P_{a_{2}}\oplus\dots\oplus P_{a_{l}}$,
where we use the convention that $a_{1}\le a_{2}\leq\dots\leq a_{l}$.

The \emph{preinjective indecomposable Kronecker module}s are also
determined up to isomorphism by their dimension vector. For
$n\in\mathbb{N}$ we will denote by $I_{n}$ the indecomposable
preinjective module of dimension $(n,n+1)$. So $I_{0}$ and $I_{1}$
are the injective indecomposable modules ($P_{0}=S_{2}$ being
simple). It is known that (up to isomorphism) $I_{n}=( k^{n},
k^{n+1};f,g)$, where choosing the canonical basis in $ k^{n+1}$ and
$ k^{n}$, the matrix of $f: k^{n+1}\to k^{n}$ (respectively of $g:
k^{n+1}\to k^{n}$) is $\begin{pmatrix}\mathbb I_{n} &
0\end{pmatrix}$ (respectively $\begin{pmatrix}0 &
\mathbb I_{n}\end{pmatrix}$). Thus in this case
\[
I_{n}:\xymatrix{ k^{n} &  k^{n+1}\ar@<1ex>[l]^{(0\, \mathbb
I_{n})}\ar@<-0.8ex>[l]_{(\mathbb I_{n}\,0)}} ,
\]
and we have for the defect $\partial I_{n}=1$.

We define a \emph{preinjective Kronecker module} $I$ as being a direct
sum of indecomposable preinjective modules: $I=I_{a_{1}}\oplus I_{a_{2}}\oplus\dots\oplus I_{a_{l}}$,
where we use the convention that $a_{1}\ge a_{2}\ge\dots\ge a_{l}$.

The \emph{regular indecomposable Kronecker module}s are those
indecomposable modules $M\in\Mod k K$ which are neither
preprojective nor preinjective. We describe here shortly only the
case when the base field is algebraically closed. If $ k=\bar{ k}$
is algebraically closed, then the regular indecomposables are
\[
R_{\mu}(n):\xymatrix{ k^{n} &
 k^{n}\ar@<1ex>[l]^{\mathbb I_{n}}\ar@<-0.8ex>[l]_{J_{\mu,n}}} \mbox{
for }k\in\bar{ k}\mbox{ and }R_{\infty}(n):\xymatrix{ k^{n} &
 k^{n}\ar@<1ex>[l]^{J_{0,n}}\ar@<-0.8ex>[l]_{\mathbb I_{n}}} ,
\]
 where $J_{\mu,n}$ is the $n\times n$ Jordan block with eigenvalue $\mu$.
The dimension of a regular indecomposable will be $\Dim
R_{p}(n)=(n,n)$ and we have for the defect $\partial R_{p}(n)=0$,
where $p\in\bar{ k}\cup\{\infty\}$.

A module $R\in\Mod k K$ will be called a \emph{regular Kronecker
module} if it is a direct sum of regular indecomposables. If
$\mu=(\mu_{1},\mu_{2},\dots,\mu_{m})$ is a partition, then we use
the notation $R_{p}(\mu)=R_{p}(\mu_{1})\oplus
R_{p}(\mu_{1})\oplus\dots\oplus R_{p}(\mu_{m})$.

The category $\Mod k K$ is a is a \emph{Krull-Schmidt category},
meaning that every module $M\in\Mod k K$ has a unique decomposition
\[
M=(P_{c_{1}}\oplus\dots\oplus P_{c_{n}})\oplus(\oplus_{p\in\mathcal{P}}R_{p}(\mu^{(p)}))\oplus(I_{d_{1}}\oplus\dots\oplus I_{d_{m}}),
\]
 where
\begin{itemize}
\item $(c_{1},\dots,c_{n})$ is a finite increasing sequence of non-negative
integers;
\item $\mu^{(p)}=(\mu_{1},\dots,\mu_{t})$ is a nonzero partition for finitely
many $p\in\bar{ k}\cup\{\infty\}$;
\item $(d_{1},\dots,d_{m})$ is a finite decreasing sequence of non negative
integers.
\end{itemize}
The integer sequences $(c_{1},\dots,c_{n})$ and
$(d_{1},\dots,d_{m})$ together with the partitions $\mu^{(p)}$
corresponding to every $p\in\bar{ k}\cup\{\infty\}$ are called the
\emph{Kronecker invariants} of the module $M$. Hence Kronecker
invariants determine a module $M\in\Mod k K$ up to isomorphism.

The following lemmas are well-known:
\begin{lem}
\label{lem:Ses-dim-defect}If there is a short exact sequence $0\to M^{\prime}\to M\to M^{\dprime}\to0$
of Kronecker modules, then $\Dim M=\Dim M^{\prime}+\Dim M^{\dprime}$
and $\partial M=\partial M^{\prime}+\partial M^{\dprime}$.
\end{lem}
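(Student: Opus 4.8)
The plan is to prove Lemma~\ref{lem:Ses-dim-defect} directly from the definitions, exploiting the fact that both the dimension vector $\Dim$ and the defect $\partial$ are additive on short exact sequences. First I would recall that a short exact sequence $0\to M'\to M\to M''\to 0$ in $\Mod kK$ is, by the equivalence with $\Rep kK$, equivalently a short exact sequence of representations, which amounts to short exact sequences of $k$-vector spaces at each of the two vertices. Concretely, writing $M'=(V_1',V_2';\ldots)$, $M=(V_1,V_2;\ldots)$ and $M''=(V_1'',V_2'';\ldots)$, exactness gives $0\to V_i'\to V_i\to V_i''\to 0$ for $i=1,2$, and hence $\dim_k V_i=\dim_k V_i'+\dim_k V_i''$ by rank--nullity. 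Since $\Dim M=(\dim_k V_1,\dim_k V_2)$, this yields $\Dim M=\Dim M'+\Dim M''$ componentwise.

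Alternatively, and perhaps more cleanly, I would argue via composition series: $m_{S_i}(M)$ is the multiplicity of $S_i$ as a composition factor, and concatenating a composition series of $M'$ with one of $M''$ (lifted through the epimorphism $M\to M''$) gives a composition series of $M$, so $m_{S_i}(M)=m_{S_i}(M')+m_{S_i}(M'')$ for $i=1,2$. This is the Jordan--H\"older additivity of composition-factor multiplicities and gives $\Dim M=\Dim M'+\Dim M''$ immediately.

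For the defect, I would simply invoke the definition $\partial N=b-a$ where $\Dim N=(a,b)$. Writing $\Dim M'=(a',b')$ and $\Dim M''=(a'',b'')$, the already-established identity $\Dim M=(a'+a'',b'+b'')$ gives
\[
\partial M=(b'+b'')-(a'+a'')=(b'-a')+(b''-a'')=\partial M'+\partial M''.
\]
So the defect statement is a formal consequence of the dimension statement, since $\partial$ is the linear functional $(a,b)\mapsto b-a$ applied to $\Dim$.

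There is essentially no obstacle here: the only point requiring any care is justifying that a short exact sequence of Kronecker modules induces short exact sequences of the underlying vector spaces at each vertex (equivalently, that the forgetful functors $\Mod kK\to \mathrm{vect}_k$ at each vertex are exact), which is standard for representations of quivers since kernels and cokernels in $\Rep kK$ are computed vertexwise. Once that is noted, both claims follow from rank--nullity (or Jordan--H\"older) and the linearity of the defect functional. I would keep the write-up to a few lines, citing the equivalence $\Mod kK\simeq\Rep kK$ recalled in the introduction and treating the rest as routine.
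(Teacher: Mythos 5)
Your argument is correct: vertexwise exactness of short exact sequences in $\Rep kK$ (equivalently, Jordan--H\"older additivity of composition-factor multiplicities) gives the additivity of $\Dim$, and the defect statement then follows since $\partial$ is the linear functional $(a,b)\mapsto b-a$ applied to $\Dim M$. The paper states this lemma as well-known and gives no proof, so there is nothing to compare against; your write-up is exactly the standard justification one would supply.
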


\begin{lem}
\label{lem:ext-closed}Preinjectives (respectively preprojectives)
are extension closed. This means that in a short exact sequence of
the form $0\to Y\to M\to Y^{\prime}\to0$ with $Y,Y^{\prime}$ preinjectives
(preprojectives) $M$ must be also preinjective (preprojective).
\end{lem}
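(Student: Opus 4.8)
The plan is to derive the statement for preinjectives (the preprojective case being dual) from the standard orthogonality relations in $\Mod kK$ recalled above: $\Hom(X,Y)=0$ whenever $X$ is preinjective or regular and $Y$ is preprojective, and $\Hom(X,Y)=0$ whenever $X$ is preinjective and $Y$ is regular. In other words, there are no nonzero morphisms running ``backwards'' along preprojective $\to$ regular $\to$ preinjective. For any $N\in\Mod kK$ write $N=N_{p}\oplus N_{r}\oplus N_{i}$ for its (Krull--Schmidt) decomposition into preprojective, regular and preinjective parts; then $N$ is preinjective exactly when $N_{p}=N_{r}=0$.

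So let $0\to Y\to M\to Y^{\prime}\to0$ be exact with $Y,Y^{\prime}$ preinjective, and denote by $\pi_{p}\colon M\to M_{p}$ and $\pi_{r}\colon M\to M_{r}$ the canonical projections. First compose the given monomorphism $Y\hookrightarrow M$ with $\pi_{p}$: this is a morphism from the preinjective $Y$ to the preprojective $M_{p}$, hence it is zero, so the copy of $Y$ inside $M$ lies in $\ker\pi_{p}$ and therefore $\pi_{p}$ factors through $M/Y\cong Y^{\prime}$. Thus there is an epimorphism $Y^{\prime}\twoheadrightarrow M_{p}$; this again goes from a preinjective to a preprojective, so it is zero, and being surjective it forces $M_{p}=0$. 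Repeating the argument verbatim with $\pi_{r}$ in place of $\pi_{p}$ (using the vanishing of $\Hom(\text{preinjective},\text{regular})$ both for the composite $Y\hookrightarrow M\to M_{r}$ and for the resulting epimorphism $Y^{\prime}\twoheadrightarrow M_{r}$) yields $M_{r}=0$. Hence $M=M_{i}$ is preinjective.

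Dually, for $0\to Y\to M\to Y^{\prime}\to0$ with $Y,Y^{\prime}$ preprojective, the composites $M_{r}\hookrightarrow M\to Y^{\prime}$ and $M_{i}\hookrightarrow M\to Y^{\prime}$ vanish (there are no morphisms from a regular, resp. preinjective, module to the preprojective $Y^{\prime}$), so $M_{r}\oplus M_{i}$ is contained in $\ker(M\to Y^{\prime})\cong Y$; the induced monomorphisms $M_{r}\hookrightarrow Y$ and $M_{i}\hookrightarrow Y$ are then morphisms into a preprojective module from a regular, resp. preinjective, module, hence zero, so $M_{r}=M_{i}=0$ and $M=M_{p}$ is preprojective.

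The argument is essentially formal once the three $\Hom$-vanishings are in hand, so these carry all the content; the hard part, for a self-contained treatment over the Kronecker algebra, is precisely to establish them. I would do this directly from the explicit representations of $P_{n}$, $I_{n}$ and $R_{p}(n)$ recalled in Section \ref{sec:Kronecker-modules}: write a hypothetical morphism as a pair of matrices, impose the two intertwining conditions coming from the arrows $\alpha$ and $\beta$, and verify that the resulting linear systems admit only the zero solution. Alternatively one may invoke the Auslander--Reiten structure of $\Mod kK$, deducing the vanishings from Serre duality together with the fact that $\tau$ and $\tau^{-1}$ map each of the three classes into itself (allowing the zero module).
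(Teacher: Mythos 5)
Your argument is correct. Note that the paper itself does not prove this lemma at all: it is stated as ``well-known'' and implicitly referred to the standard references on the category $\Mod kK$ (Assem--Simson--Skowro\'nski, Auslander--Reiten--Smal\o, Ringel), so there is no in-paper proof to compare against. What you give is the standard deduction of extension-closedness from the orthogonality relations $\Hom(\text{preinjective},\text{preprojective})=\Hom(\text{regular},\text{preprojective})=\Hom(\text{preinjective},\text{regular})=0$: the factorization of $\pi_p$ (resp. $\pi_r$) through the cokernel $Y^{\prime}$ is exactly the universal property of the cokernel, the induced map $Y^{\prime}\twoheadrightarrow M_p$ inherits surjectivity from $\pi_p$, and its vanishing then kills $M_p$; the dual case is handled symmetrically. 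This reduction is complete and airtight. The only content you defer is the three $\Hom$-vanishings themselves, which you correctly identify as the real input; both routes you sketch for them work, though the direct matrix computation from the explicit presentations of $P_n$, $I_n$, $R_p(n)$ is the more self-contained one (the Auslander--Reiten/Serre-duality route needs the additional standard fact that the preprojective and preinjective components are directed and closed under predecessors, resp. successors). Since the paper treats these facts as part of the known structure theory of $\Mod kK$, invoking them is entirely in keeping with the level of detail the authors themselves adopt.
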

The category of Kronecker modules has been extensively studied
because the Kronecker algebra is a very important example of a tame
hereditary algebra. Moreover, the category has also a geometric
interpretation, since it is derived equivalent with the category
$\mbox{Coh}(\mathbb{P}^{1}( k))$ of coherent sheaves on the
projective line (see \cite{Beil}).

\section{Morphisms and short exact sequences\label{sec:Morphisms-and-short}}

In what follows we compile a few of our recent results on morphisms
and short exact sequences of Kronecker modules required for the
proofs in the next section. We emphasize that the theorems stated
here are valid independently of the underlying field $ k$ (as shown
in \cite{SzSz}).

We present now the numerical criteria for the existence of a
monomorphism $f:I{}^{\prime}\to I$ where $I,I{}^{\prime}$ are
preinjectives. The proof relies on homological algebra and the
knowledge on the category $\Mod kK$.
\begin{thm}
[\cite{SzSz}]\label{thm:Preinjective-embedding}Suppose $d_{1}\geq...\geq d_{n}>0$
and $c_{1}\geq...\geq c_{m}>0$ are integers. We have a monomorphism
\[
f:I_{d_{1}}\oplus...\oplus I_{d_{n}}\oplus dI_{0}\to I_{c_{1}}\oplus...\oplus I_{c_{m}}\oplus cI_{0}
\]
 if and only if $d\leq c$ and $d_{i}+...+d_{n}\leq\sum_{c_{j}\leq d_{i}}c_{j}$
for $i=\overline{1,n}$ (the empty sum being 0).\end{thm}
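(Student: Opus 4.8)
The plan is to work on the level of representations and to prove both
implications by essentially the same bookkeeping device: a monomorphism
$f\colon I'\to I$ between preinjectives exists if and only if the cokernel
$\operatorname{coker} f$ is again preinjective (this is forced by
Lemma~\ref{lem:ext-closed} together with Lemma~\ref{lem:Ses-dim-defect},
since $\partial I=\partial I'$ implies the cokernel has defect $0$, but any
quotient of a preinjective has nonnegative defect, and a preinjective with a
surjection onto it from a preinjective \dots wait, one must instead note that
the cokernel of a mono between preinjectives has defect equal to
$\partial I-\partial I'$, which here is $(c)-(d)$ counted with the
$I_0$-summands, so the cokernel is forced to be a sum of $I_0$'s plus regulars;
the preinjectivity of the cokernel is the real content). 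Concretely I would
first reduce to the statement: there is a short exact sequence
$0\to I'\to I\to C\to 0$ with $C$ preinjective of the form $(c-d)I_0$ plus
possibly some $I_0$'s absorbed, i.e.\ $C$ a direct sum of copies of $I_0$.

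\textbf{Necessity.}
Given a monomorphism $f$, form the exact sequence $0\to I'\to I\to C\to0$.
By Lemma~\ref{lem:Ses-dim-defect}, $\Dim C=\Dim I-\Dim I'$ and
$\partial C=\partial I-\partial I'=c-d$, so $c\ge d$ would follow once we know
$C$ is preinjective (preinjectives have defect $\ge 0$ on each summand, and a
regular summand would contribute $0$ while a preprojective summand contributes
$-1$; to exclude preprojective and regular summands one uses that
$\operatorname{Hom}(I,\text{preproj})=0$ and
$\operatorname{Hom}(I,\text{regular})$ does not surject, or rather that a
quotient of a preinjective is preinjective — this last fact is standard for the
Kronecker algebra and I would cite or quickly reprove it via
$\operatorname{Ext}^1(\text{preinj},\text{preproj or regular})$ vanishing in
the relevant direction). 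Having $C$ preinjective of defect $c-d$, and computing
$\Dim C=(\sum c_j - \sum d_i,\ \sum c_j - \sum d_i + c - d)$, one sees the only
preinjective with a dimension vector of the shape $(N,N+(c-d))$ that can be a
quotient here is $(c-d)I_0\oplus(\text{something of defect }0)$; pushing a bit
further with the structure of $\operatorname{Hom}(I',I)$ forces
$C\cong (c-d)I_0$. The inequalities $d_i+\dots+d_n\le\sum_{c_j\le d_i}c_j$ then
come out by restricting $f$ to the subrepresentation generated by the socles of
the "tall" summands $I_{d_i},\dots,I_{d_n}$ and comparing the dimension of its
image in the bottom part of $I$; the summands $I_{c_j}$ with $c_j>d_i$ can only
receive from $I'$ in a way that does not help, which is exactly what the
weighted dominance condition $\sum_{c_j\le d_i}c_j$ encodes.

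\textbf{Sufficiency.}
Conversely, assume $d\le c$ and the dominance inequalities hold. I would build
$f$ explicitly. Order things so that the condition for $i=n$ reads
$d_n\le\sum_{c_j\le d_n}c_j$; peel off one summand $I_{d_n}$ at a time,
embedding it into a suitable direct sum of those $I_{c_j}$ with $c_j\le d_n$
(an embedding $I_a\hookrightarrow I_{b_1}\oplus\dots\oplus I_{b_r}$ with
$b_1+\dots+b_r\ge a$ and each $b_s\le a$ exists — this is the base case,
checked by writing the matrices $(\mathbb I\,0)$, $(0\,\mathbb I)$ in block
form), using up part of the "budget" $c_j$ and leaving residual preinjective
summands of smaller defect-weight for the next step; the remaining
$c-d$ units of defect get soaked up by mapping the $dI_0$ into $cI_0$ as a
split inclusion. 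The induction hypothesis is that after removing $I_{d_n}$ the
reduced invariants still satisfy the same system of inequalities, which is a
short combinatorial check on partial sums. Assembling the block-diagonal-ish
map gives the desired monomorphism $f$.

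\textbf{Main obstacle.}
The genuinely delicate point is the bookkeeping in sufficiency: showing that
greedily embedding the tallest remaining $I_{d_i}$ into the available short
$I_{c_j}$'s can always be done so that the \emph{residual} system of
inequalities is preserved. This is where one must be careful about which
$c_j$'s get "partially consumed" and in what order, and it is the combinatorial
heart of the argument; everything homological (Lemmas~\ref{lem:Ses-dim-defect}
and~\ref{lem:ext-closed}, vanishing of the relevant $\operatorname{Ext}$ and
$\operatorname{Hom}$ groups) is standard for the Kronecker algebra and only
needs to be invoked. A secondary subtlety on the necessity side is pinning down
that the cokernel is \emph{exactly} $(c-d)I_0$ and not merely preinjective of
that defect; this uses that the only preinjective indecomposable admitting a
surjection from a preinjective without enlarging the "height profile" beyond
what the source allows is $I_0$ itself.
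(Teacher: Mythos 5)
The paper does not prove this theorem; it is quoted from \cite{SzSz}, so there is no in-paper proof to compare against. Evaluating your attempt on its own merits, there are concrete errors that sink the necessity argument, in addition to the sufficiency part being only a sketch.

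The central problem is your claim that the cokernel $C$ of a monomorphism $I'\hookrightarrow I$ is $(c-d)I_0$. First, the defect computation is wrong: with $I'=I_{d_1}\oplus\dots\oplus I_{d_n}\oplus dI_0$ and $I=I_{c_1}\oplus\dots\oplus I_{c_m}\oplus cI_0$ one has $\partial I'=n+d$ and $\partial I=m+c$, so $\partial C=(m+c)-(n+d)$, not $c-d$. Second, and more importantly, the cokernel of a monomorphism between preinjectives is indeed always preinjective (this does follow from $\Hom(\mathcal I,\mathcal P)=\Hom(\mathcal I,\mathcal R)=0$), but it is \emph{not} in general a direct sum of copies of $I_0$. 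The split embedding $I_0\hookrightarrow I_0\oplus I_1$ has cokernel $I_1$. The paper itself makes this point in Remark~\ref{rem:Preinjective-embedding-multiplicatively}: the sub-case where the cokernel can be chosen to be $\beta I_0$ is precisely the one in which the last dominance inequality becomes an \emph{equality}; generically it is a strict inequality and the cokernel is a bigger preinjective. Your plan collapses Theorem~\ref{thm:Preinjective-embedding} to this special case, which is strictly weaker (it is the Baraga\~na--Zaballa column-completion criterion mentioned in the Introduction), so the resulting argument cannot recover the full statement.

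On the sufficiency side, the greedy peeling idea is plausible in spirit but you explicitly identify, and do not close, the combinatorial gap: one must show that after embedding the tallest $I_{d_i}$ into a subcollection of the $I_{c_j}$'s, the residual invariants still satisfy the weighted dominance system, and it is not clear that a naive greedy choice of which $c_j$'s to consume preserves this. Without that lemma, and with the necessity argument resting on a false structural claim about the cokernel, the proof as written does not establish the theorem. A correct route should keep the cokernel as an arbitrary preinjective of the appropriate defect and extract the dominance inequalities from $\dim\Hom$ or from a careful comparison of socle filtrations, rather than forcing $C\cong(c-d)I_0$.
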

\begin{rem}
\label{rem:Preinjective-embedding-multiplicatively}Using the notation
$I{}^{\prime}=(a_{0}I_{0})\oplus...\oplus(a_{n}I_{n})\oplus...$,
$I=(b_{0}I_{0})\oplus...\oplus(b_{n}I_{n})\oplus...$ we have a monomorphism
$f:I{}^{\prime}\to I$ if and only if
\begin{eqnarray*}
a_{0} & \leq & b_{0}\\
a_{1} & \leq & b_{1}\\
a_{1}+2a_{2} & \leq & b_{1}+2b_{2}\\
 & \vdots\\
a_{1}+2a_{2}+...+na_{n} & \leq & b_{1}+2b_{2}+...+nb_{n}\\
 & \vdots
\end{eqnarray*}

So one can see that in the preinjective case ``a kind of'' weighted
dominance describes the numerical criteria for the embedding (it is
well-known that dominance ordering plays a crucial role in partition
combinatorics).

One should also note (using Lemma \ref{lem:Ses-dim-defect}) that
if $a_{m}=0$ for all $m>n$, then we have an exact sequence of the
form $0\to I^{\prime}\to I\to\beta I_{0}\to0$ (with $\beta$ arbitrary)
if and only if $b_{m}=0$ for all $m>n$ and
\begin{eqnarray*}
a_{0} & \leq & b_{0}\\
a_{1} & \leq & b_{1}\\
a_{1}+2a_{2} & \leq & b_{1}+2b_{2}\\
 & \vdots\\
a_{1}+2a_{2}+...+na_{n} & = & b_{1}+2b_{2}+...+nb_{n}.
\end{eqnarray*}

\end{rem}

\begin{rem}
Theorem \ref{thm:Preinjective-embedding} can be easily dualized for
preprojectives.
\end{rem}
We are going to state some results on short exact sequences in terms
of extension monoid products, so let us introduce this notion
shortly. For $d\in\mathbb{N}^{2}$ let $M_{d}=\{[M]\vert M\in\Mod k
K,\Dim M=d\}$ be the set of isomorphism classes of Kronecker modules
of dimension $d$. Following Reineke in \cite{Rei} for subsets
$\mathcal{A}\subset M_{d}$, $\mathcal{B}\subset M_{e}$ we define
\[
\mathcal{A}*\mathcal{B}=\{[Y]\in M_{d+e}\,|\,\exists\,0\to N\to Y\to M\to0\text{ exact for some }[M]\in\mathcal{A},[N]\in\mathcal{B}\}.
\]
 So the product $\mathcal{A}*\mathcal{B}$ is the set of isoclasses
of all extensions of modules $M$ with $[M]\in\mathcal{A}$ by modules
$N$ with $[N]\in\mathcal{B}$. This is in fact Reineke's extension
monoid product using isomorphism classes of modules instead of modules.
It is important to know (see \cite{Rei}) that the product above is
associative, i.e. for $\mathcal{A}\subset M_{d}$, $\mathcal{B}\subset M_{e}$,
$\mathcal{C}\subset M_{f}$, we have $(\mathcal{A}*\mathcal{B})*\mathcal{C}=\mathcal{A}*(\mathcal{B}*\mathcal{C})$.
Also $\{[0]\}*\mathcal{A}=\mathcal{A}*\{[0]\}=\mathcal{A}$. We will
call the operation ``$*$'' simply the \emph{extension monoid product}.

Using a set of rules describing the extension monoid products of Kronecker
modules in various cases we have proved in \cite{Szo3} the following
property for the extension monoid product of a preinjective and a
preprojective Kronecker module:
\begin{thm}
\label{thm:PII-III-ses-equivalence}Let $q>n>0$, $d_{1}\ge\dots\ge d_{q}\ge0$,
$c_{1}\ge\dots\ge c_{q-n}\ge0$ and $0\leq a_{1}\leq\dots\leq a_{n}$
be non-negative integers. Then $[I_{c_{1}}\oplus\dots\oplus I_{c_{q-n}}]\in\{[I_{d_{1}}\oplus\dots\oplus I_{d_{q}}]\}*\{[P_{a_{1}}\oplus\dots\oplus P_{a_{n}}]\}$
if and only if $[I_{d_{1}+a_{n}+1}\oplus\dots\oplus I_{d_{q}+a_{n}+1}]\in\{[I_{a_{n}-a_{1}}\oplus\dots\oplus I_{a_{n}-a_{n-1}}\oplus I_{0}]\}*\{[I_{c_{1}+a_{n}+1}\oplus\dots\oplus I_{c_{q-n}+a_{n}+1}]\}$,
or equivalently there is a short exact sequence
\[
0\to P_{a_{1}}\oplus\dots\oplus P_{a_{n}}\to I_{c_{1}}\oplus\dots\oplus I_{c_{q-n}}\to I_{d_{1}}\oplus\dots\oplus I_{d_{q}}\to0
\]
 if and only if there is a short exact sequence
\[
0\to I_{c_{1}+a_{n}+1}\oplus\dots\oplus I_{c_{q-n}+a_{n}+1}\to I_{d_{1}+a_{n}+1}\oplus\dots\oplus I_{d_{q}+a_{n}+1}\to I_{a_{n}-a_{1}}\oplus\dots\oplus I_{a_{n}-a_{n-1}}\oplus I_{0}\to0.
\]

\end{thm}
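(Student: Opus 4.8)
The plan is to reduce everything to the existence statement for short exact sequences of preinjectives governed by Theorem \ref{thm:Preinjective-embedding} and Remark \ref{rem:Preinjective-embedding-multiplicatively}, passing through a ``twisting'' functor. The crucial observation is that tensoring (or applying a suitable autoequivalence / shift in the derived category $D^b(\mathrm{Coh}\,\mathbb{P}^1)$) by an appropriate line bundle sends $P_{a}$ to $I_{a_n-a}$ and $I_{c}$ to $I_{c+a_n+1}$, while preserving short exact sequences. Concretely, set $N=a_n+1$. First I would recall that in the Kronecker setting there is a reflection-type device (the ``shift by $N$'') which, on the level of Kronecker invariants, acts on an indecomposable by $P_a\mapsto I_{N-1-a}$ when $0\le a\le N-1$, on $I_c\mapsto I_{c+N}$ always, and which is compatible with the formation of extensions in the precise sense that it carries a short exact sequence $0\to B\to E\to A\to 0$ with all terms in the relevant range to a short exact sequence with the images in reverse-arrow positions. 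Under the convention $a_1\le\dots\le a_n\le a_n$, every $P_{a_i}$ lies in the admissible range $0\le a_i\le N-1=a_n$, so $P_{a_i}\mapsto I_{a_n-a_i}$ and the ``extra'' copy $I_0$ on the right of the second sequence is exactly the image of $P_{a_n}\mapsto I_{a_n-a_n}=I_0$ together with bookkeeping of defects; here one uses Lemma \ref{lem:Ses-dim-defect} to see that defect and dimension are balanced on both sides.

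The key steps, in order, would be: (1) state and justify the twisting functor (or, equivalently, the reflection-functor computation on Auslander--Reiten orbits) and verify its action $P_a\mapsto I_{N-1-a}$, $I_c\mapsto I_{c+N}$ on indecomposables, together with exactness/monicity/epicity preservation; (2) translate the hypothesized extension $[I_{c_1}\oplus\dots\oplus I_{c_{q-n}}]\in\{[I_{d_1}\oplus\dots\oplus I_{d_q}]\}*\{[P_{a_1}\oplus\dots\oplus P_{a_n}]\}$ into the existence of a short exact sequence $0\to P_{a_1}\oplus\dots\oplus P_{a_n}\to I_{c_1}\oplus\dots\oplus I_{c_{q-n}}\to I_{d_1}\oplus\dots\oplus I_{d_q}\to 0$ (this is just unwinding the definition of ``$*$''); (3) apply the functor termwise and read off that the image sequence is $0\to I_{c_1+N}\oplus\dots\oplus I_{c_{q-n}+N}\to I_{d_1+N}\oplus\dots\oplus I_{d_q+N}\to I_{a_n-a_1}\oplus\dots\oplus I_{a_n-a_{n-1}}\oplus I_0\to 0$; (4) observe the functor is an equivalence (hence reversible), so the implication runs both ways; (5) re-express each direction as an extension-monoid membership to obtain the stated equivalent formulation. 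One consistency check worth spelling out is that $\partial$ and $\underline{\dim}$ match: the left-hand sequence has $\sum c_i=\sum d_j+\sum(a_i+\text{shifted dims})$ forcing $q-(q-n)$ to account correctly, and after the shift by $N$ the ``$q$ summands vs.\ $q-n$ summands'' counts swap, consistent with $n$ summands $I_{a_n-a_i}$ (for $i<n$) plus one $I_0$ appearing on the right.

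I expect the main obstacle to be step (1): pinning down the right twisting functor and proving it does what is claimed on \emph{all} the indecomposables that can occur, not just the displayed direct sums. One has to be careful that $I_0$-summands (simple injectives) behave correctly under the shift and that no regular summands sneak in — this is where Lemma \ref{lem:ext-closed} is essential, guaranteeing the middle term of each short exact sequence stays preinjective so that the functor's action is governed entirely by the preinjective formulas. A secondary subtlety is the boundary case: when some $a_i=a_n$ (several maximal parts), the images $I_{a_n-a_i}=I_0$ coincide and must be counted with multiplicity, and one must check the indexing $a_n-a_1\ge\dots\ge a_n-a_{n-1}\ge 0$ together with the appended $I_0$ reproduces exactly the multiset on the right-hand side; this is routine once the functor is in hand. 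With (1) established, steps (2)--(5) are essentially formal manipulations of the definitions, invoking only associativity of ``$*$'' and the Krull--Schmidt uniqueness recorded in Section \ref{sec:Kronecker-modules}.
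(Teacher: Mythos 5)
The paper does not give its own proof of this statement; it is quoted from \cite{Szo3}, where the argument is described as coming from ``a set of rules describing the extension monoid products of Kronecker modules in various cases,'' i.e.\ a purely combinatorial, module-theoretic calculus. Your derived-category route is genuinely different, and the underlying idea is sound, but your step (1) as written contains an impossibility: no additive functor on an abelian category can simultaneously preserve short exact sequences (your ``exactness/monicity/epicity preservation'') \emph{and} move the kernel summands into the cokernel position (your ``reverse-arrow positions''). As stated, these two demands on the twisting functor are incompatible, and this is exactly the obstruction you would hit in trying to carry out step (1).

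The fix is to recognize that the relevant functor is only a triangulated equivalence, not a $t$-exact one. Under the tilting equivalence $D^b(\Mod kK)\simeq D^b(\mathrm{Coh}\,\mathbb{P}^1(k))$ one has $P_a\leftrightarrow\mathcal{O}(a)$ and $I_m\leftrightarrow\mathcal{O}(-m-1)[1]$, and the twist $F=-\otimes\mathcal{O}(-N)$ with $N=a_n+1$ then gives $F(I_c)=I_{c+N}$ as you expected, but $F(P_a)=I_{a_n-a}[-1]$, with a cohomological shift; $F$ does not preserve the heart $\Mod kK$. Applying $F$ to the triangle of the first short exact sequence yields $\bigoplus_i I_{a_n-a_i}[-1]\to\bigoplus_j I_{c_j+N}\to\bigoplus_k I_{d_k+N}\to\bigoplus_i I_{a_n-a_i}$, and one rotation produces a triangle with all three unshifted terms in the heart, hence exactly the second short exact sequence, with $I_{a_n-a_n}=I_0$ supplying the appended summand. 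Since $F$ is an equivalence the construction reverses, so the biconditional holds, and your steps (2)--(5) are then the formal bookkeeping you describe. The hypothesis $0\le a_i\le a_n$ enters precisely to guarantee $a_i-N\le -1$, so that $\mathcal{O}(a_i-N)$ lies on the preinjective side of the equivalence; multiplicities among the $a_n-a_i$ are handled automatically since $F$ is additive and $\Mod kK$ is Krull--Schmidt.
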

We will use in the proof of our main theorem the following corollary
of Theorem \ref{thm:PII-III-ses-equivalence}, obtained by applying
the theorem in the special case when kernel in the first short exact
sequence is of the form $P_{0}\oplus\dots\oplus P_{0}$:
\begin{cor}
\label{cor:P0II-III0-ses-equivalence}Let $q>\alpha>0$, $d_{1}\ge\dots\ge d_{q}\ge0$
and $c_{1}\ge\dots\ge c_{q-n}\ge0$ be non-negative integers. Then
$[I_{c_{1}}\oplus\dots\oplus I_{c_{q-\alpha}}]\in\{[I_{d_{1}}\oplus\dots\oplus I_{d_{q}}]\}*\{[\alpha P_{0}]\}$
if and only if $[I_{d_{1}+1}\oplus\dots\oplus I_{d_{q}+1}]\in\{[\alpha I_{0}]\}*\{[I_{c_{1}+1}\oplus\dots\oplus I_{c_{q-\alpha}+1}]\}$,
or equivalently there is a short exact sequence
\[
0\to\alpha P_{0}\to I_{c_{1}}\oplus\dots\oplus I_{c_{q-\alpha}}\to I_{d_{1}}\oplus\dots\oplus I_{d_{q}}\to0
\]
 if and only if there is a short exact sequence
\[
0\to I_{c_{1}+1}\oplus\dots\oplus I_{c_{q-\alpha}+1}\to I_{d_{1}+1}\oplus\dots\oplus I_{d_{q}+1}\to\alpha I_{0}\to0.
\]

\end{cor}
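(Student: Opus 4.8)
The plan is to derive Corollary~\ref{cor:P0II-III0-ses-equivalence} as a direct specialization of Theorem~\ref{thm:PII-III-ses-equivalence}. First I would set $n=\alpha$ and take all the preprojective summands to be projective simple, i.e.\ $a_{1}=\dots=a_{\alpha}=0$, so that $P_{a_{1}}\oplus\dots\oplus P_{a_{n}}=\alpha P_{0}$. With this choice $a_{n}=0$, hence $a_{n}+1=1$ and $a_{n}-a_{i}=0$ for all $i=\overline{1,n}$. Substituting these values into the two equivalent statements of Theorem~\ref{thm:PII-III-ses-equivalence}, the shift $d_{j}\mapsto d_{j}+a_{n}+1$ becomes $d_{j}\mapsto d_{j}+1$, the shift $c_{j}\mapsto c_{j}+a_{n}+1$ becomes $c_{j}\mapsto c_{j}+1$, and the preinjective module $I_{a_{n}-a_{1}}\oplus\dots\oplus I_{a_{n}-a_{n-1}}\oplus I_{0}$ collapses to $(n-1)I_{0}\oplus I_{0}=\alpha I_{0}$.

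With these substitutions the conclusion of the corollary reads off verbatim from the conclusion of the theorem: the membership $[I_{c_{1}}\oplus\dots\oplus I_{c_{q-\alpha}}]\in\{[I_{d_{1}}\oplus\dots\oplus I_{d_{q}}]\}*\{[\alpha P_{0}]\}$ is equivalent to $[I_{d_{1}+1}\oplus\dots\oplus I_{d_{q}+1}]\in\{[\alpha I_{0}]\}*\{[I_{c_{1}+1}\oplus\dots\oplus I_{c_{q-\alpha}+1}]\}$, and the two short exact sequence formulations are exactly the stated ones. Since the numerical hypotheses of the corollary ($q>\alpha>0$, $d_{1}\ge\dots\ge d_{q}\ge0$, $c_{1}\ge\dots\ge c_{q-\alpha}\ge0$) are precisely the hypotheses of Theorem~\ref{thm:PII-III-ses-equivalence} under the identification $n=\alpha$ together with $0\le a_{1}\le\dots\le a_{n}$ being the constant zero sequence, nothing further needs to be checked.

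There is essentially no obstacle here: the only point requiring a word of care is the bookkeeping on the right-hand module, namely that $I_{a_{n}-a_{1}}\oplus\dots\oplus I_{a_{n}-a_{n-1}}\oplus I_{0}$ has $n-1$ summands coming from the indices $a_{n}-a_{i}$ for $i=\overline{1,n-1}$ (all equal to $I_{0}$ when the $a_{i}$ vanish) plus the extra $I_{0}$, giving $\alpha$ copies of $I_{0}$ in total; one should also note that the equivalence between the extension-monoid-product statement and the short-exact-sequence statement is immediate from the definition of $*$ recalled above. Thus the proof is a one-paragraph substitution argument, and I would present it as such.
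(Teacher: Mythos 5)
Your proposal is correct and is exactly the paper's own argument: the corollary is stated there as the specialization of Theorem~\ref{thm:PII-III-ses-equivalence} to $n=\alpha$ and $a_{1}=\dots=a_{\alpha}=0$, so that the shifts become $+1$ and the cokernel module collapses to $\alpha I_{0}$, precisely as you describe. Nothing further is needed.
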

In what follows we will work out how to construct a monomorphism (or
an epimorphism) $f:I^{\prime}\to I$ between two preinjective
Kronecker modules, once we have determined using Theorem
\ref{thm:Preinjective-embedding} that $I^{\prime}$ embeds in $I$ or
using Theorem \ref{thm:PII-III-ses-equivalence} that $I^{\prime}$
projects on $I$. With $d_{1}\geq...\geq d_{n}>0$ and
$c_{1}\geq...\geq c_{m}>0$, let $I^{\prime}=I_{d_{1}}\oplus...\oplus
I_{d_{n}}\oplus dI_{0}$ and $I=I_{c_{1}}\oplus...\oplus
I_{c_{m}}\oplus cI_{0}$. We also use $p=\sum_{i=1}^{n}d_{i}$ and
$s=\sum_{i=1}^{m}c_{i}$, hence $\Dim I^{\prime}=(p,p+\partial
I^{\prime})=(p,p+n+d)$ and $\Dim I=(s,s+\partial I)=(s,s+m+c)$.

Identifying the modules with their representations it is clear that
we have a monomorphism (or an epimorphism) $f:I^{\prime}\to I$ if
and only if there exists a pair of full-rank matrices $(G,H)$ such
that the following diagram is commutative:
\[
\xymatrix{ k^{p}\ar@<-0.5ex>[d]_{G} &  k^{p+n+d}\ar@<0.5ex>[l]^{A^{\prime}}\ar@<-0.5ex>[l]_{A}\ar@<-3ex>[d]^{H}\\
 k^{s} &  k^{s+m+c}\ar@<0.5ex>[l]^{B^{\prime}}\ar@<-0.5ex>[l]_{B}
}
\]
On the diagram above we have the following matrices:
$A,A'\in\mathcal{M}_{p,p+n+d}( k)$, $B,B'\in\mathcal{M}_{s,s+m+c}(
k)$, $G\in\mathcal{M}_{s,p}( k)$, $H\in\mathcal{M}_{s+m+c,p+n+d}(
k)$, where \medskip{}
 \[ A = \left( \begin{array}{c:ccc}   & A_1 & \\   \Big{\underset{{\scriptscriptstyle (p\times d)}}{0}} & & \ddots & \\   \quad\; & & & A_n \\ \end{array} \right)\textrm{, } \; A' = \left( \begin{array}{c:ccc}   & A_1' & \\   \Big{\underset{{\scriptscriptstyle (p\times d)}}{0}} & & \ddots & \\   \quad\; & & & A_n' \\ \end{array} \right)\textrm{ with } \;  \begin{array}{r@{{}\mathrel{=}{}}l}     A_i & \begin{pmatrix}\mathbb I_{d_{i}} & 0\end{pmatrix} \\[\jot]     A_i' & \begin{pmatrix}0 & \mathbb I_{d_{i}}\end{pmatrix} \\   \end{array}\textrm{, } i=1,\dots ,n \]\medskip{}
\[ B = \left( \begin{array}{c:ccc}   & B_1 & \\   \Big{\underset{{\scriptscriptstyle (s\times c)}}{0}} & & \ddots & \\   \quad\; & & & B_m \\ \end{array} \right)\textrm{, } \; B' = \left( \begin{array}{c:ccc}   & B_1' & \\   \Big{\underset{{\scriptscriptstyle (s\times c)}}{0}} & & \ddots & \\   \quad\; & & & B_m' \\ \end{array} \right)\textrm{ with } \;  \begin{array}{r@{{}\mathrel{=}{}}l}     B_j & \begin{pmatrix}\mathbb I_{c_{j}} & 0\end{pmatrix} \\[\jot]     B_j' & \begin{pmatrix}0 & \mathbb I_{c_{j}}\end{pmatrix} \\   \end{array}\textrm{, } j=1,\dots ,m\]\medskip{}
\[ G=\left(\begin{array}{ccc}   G_{11} & \cdots & G_{1n}\\  \vdots & \ddots & \vdots\\  G_{m1} & \cdots & G_{mn}    \end{array}\right)\textrm{, }\;  \setlength{\dashlinegap}{2pt} H =  {\left( \begin{array}{c:ccc}  H_{00} & H_{01} & \cdots & H_{01} \\  \hdashline  H_{10} &  H_{11} & \cdots & H_{1n}\\   \vdots & \vdots & \ddots & \vdots \\  H_{m0}    & H_{m1} & \cdots & H_{mn}   \end{array} \right)}{} \]
\\
with the blocks $G_{ij}\in\mathcal{M}_{c_{i},d_{j}}( k)$,
$H_{00}\in\mathcal{M}_{c,d}( k)$, $H_{i0}\in\mathcal{M}_{c_{i}+1,d}(
k)$, $H_{0j}\in\mathcal{M}_{c,d_{j}+1}( k)$,
$H_{ij}\in\mathcal{M}_{c_{i}+1,d_{j}+1}( k)$ for $i=1,\dots,n$,
$j=1,\dots,m$. Commutativity of the diagram means the matrices $G$
and $H$ have to satisfy the following equalities: $BH=GA$ and
$B'H=GA'$. Writing out these equations using block-wise
multiplication we immediately get that $H_{j0}=0_{c,d_{j}+j}$, while
there are no restrictions in choosing $H_{00}\in\mathcal{M}_{c,d}(
k)$ and $H_{i0}\in\mathcal{M}_{c_{i}+1,d}( k)$.

For $i\in\{1\dots n\}$ and $j\in\{1\dots m\}$ let us write for the
corresponding blocks $G_{ij}=\left(g_{i',j'}\right)_{c_{i}\times d_{j}}$
and $H_{ij}=\left(h_{i',j'}\right)_{(c_{i}+1)\times(d_{j}+1)}$ and
expand the equations:
\[
\left\{ \begin{gathered}B_{i}H_{ij}=G_{ij}A_{j}\\
B_{i}^{\prime}H_{ij}=G_{ij}A'_{j}
\end{gathered}
\right.\iff\begin{cases}
h_{i',d_{j}+1}=0 & i'=1,\dots,c_{i}\\
h_{i',,j'}=g_{i',,j'} & i'=1,\dots,c_{i},j'=1,\dots,d_{j}\\
h_{i'+1,1}=0 & i'=1,\dots,c_{i}\\
h_{i'+1,j'+1}=g_{i',j'} & i'=1,\dots,c_{i},j'=1,\dots,d_{j}
\end{cases}.
\]
 So the entries in the blocks $H_{ij}$ and $G_{ij}$ must satisfy
the following relations: $h_{i',,j'}=h_{i'+1,j'+1}=g_{i',j'}$, $i'=1,\dots,c_{i}$,
$j'=1,\dots,d_{j}$ and consequently $g_{i',j'}=g_{i'+1,j'+1}$, $i'=1,\dots,c_{i}-1$,
$j'=1,\dots,d_{j}-1$ (i.e. the elements along all top-left to bottom-right
diagonals in the blocks $H_{ij}$ and $G_{ij}$ are equal). Using
$h_{i'+1,d_{j}+1}=g_{i',d_{j}}=0$ and $h_{i'+1,1}=g_{i'+1,1}=0$
for $i'=1,\dots,c_{i}-1$, we can explicitly give the elements of
the blocks $H_{ij}$ and $G_{ij}$ as:
\begin{equation}
h_{i',j'}=\begin{cases}
0 & j'-i'\notin\{0,\dots,d_{j}-c_{i}\}\\
\gamma_{j'-i'} & j'-i'\in\{0,\dots,d_{j}-c_{i}\}
\end{cases},\ i'=1,\dots,c_{i}+1,\ j'=1,\dots,d_{j}+1\label{eq:h}
\end{equation}
 and
\begin{equation}
g_{i',j'}=\begin{cases}
0 & j'-i'\notin\{0,\dots,d_{j}-c_{i}\}\\
\gamma_{j'-i'} & j'-i'\in\{0,\dots,d_{j}-c_{i}\}
\end{cases},\ i'=1,\dots,c_{i},\ j'=1,\dots,d_{j},\label{eq:g}
\end{equation}
 where $\gamma_{t}\in k$, $t\in\{0,\dots,d_{j}-c_{i}\}$. If
$d_{j}\ge c_{i}$ and at least one value $\gamma_{j'-i'}\neq0$, then
the blocks $H_{ij}$ and $G_{ij}$ both have full-rank. With this
information in mind (and the fact that the elements of the the top-left
block $H_{00}$ may be chosen arbitrarily) it is straightforward to
construct the full-rank matrices $H$ and $G$.

\section{Matrix pencils as Kronecker modules}

Next we will translate all the terms taken from pencil theory into
the language of Kronecker modules (representations). Indeed one can
easily see that a matrix pencil $A+\lambda B\in\mathcal{M}_{m,n}(
k[\lambda])$ corresponds to the Kronecker module $M_{A,B}=( k^{m},
k^{n};f_{A},f_{B})$, where choosing the canonical basis in $ k^{n}$
and $ k^{m}$ the matrix of $f_{A}: k^{n}\to k^{m}$ (respectively of
$f_{B}: k^{n}\to k^{m}$) is $A$ (respectively $B$). The strict
equivalence $A+\lambda B\sim A^{\prime}+\lambda B^{\prime}$ means
the isomorphism of modules $M_{A,B}\cong M_{A^{\prime},B^{\prime}}$.
It is also known that a pencil $A^{\prime}+\lambda B^{\prime}$ is a
subpencil of $A+\lambda B$ if and only if the module $M_{A',B'}$ is
a subfactor of $M_{A,B}$ (see \cite{Han}).

It is also clear that we have the following correspondence between
the classical Kronecker invariants and the Kronecker invariants (for
Kronecker modules) introduced in Section
\ref{sec:Kronecker-modules}: the minimal indices for rows correspond
to the integers $(c_{1},...,c_{n})$ parameterizing the preprojective
part, the minimal indices for columns correspond to the integers
$(d_{1},...,d_{m})$ parameterizing the preinjective part, the finite
elementary divisors correspond to the nonzero partitions
$\mu^{(p)}$, $p\in\bar{ k}$ and the infinite elementary divisors to
the partition $\mu^{(\infty)}$ (more precisely the partition
$\mu^{(p)}$ describes the dimensions of the Jordan blocks
corresponding to $p$).

Based on \cite{Han} we easily obtain the following :
\begin{prop}
\label{thm:Subpencil-iff-I0P0}$A^{\prime}+\lambda
B^{\prime}\in\mathcal{M}_{m^{\prime},n^{\prime}}( k[\lambda])$ is a
subpencil of $A+\lambda B\in\mathcal{M}_{m,n}( k[\lambda])$ if and
only if $m\ge m^{\prime}$, $n\ge n^{\prime}$ and
$[M_{A,B}]\in\{[(n-n^{\prime})I_{0}]\}*\{[M_{A^{\prime},B^{\prime}}]\}*\{[(m-m^{\prime})P_{0}]\}$.
\end{prop}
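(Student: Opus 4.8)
The plan is to translate the subpencil relation, via the module dictionary already established, into a factorization of $[M_{A,B}]$ in the extension monoid, and then to peel off the contributions of the ``extra'' rows and columns as copies of $P_0$ and $I_0$ respectively. First I would invoke the result of Han recalled just before the statement: $A'+\lambda B'$ is a subpencil of $A+\lambda B$ if and only if $M_{A',B'}$ is a subfactor of $M_{A,B}$, i.e.\ there is a linking module $L$ with $M_{A',B'}\hookrightarrow L$ and $M_{A,B}\twoheadrightarrow L$. Equivalently $M_{A,B}\twoheadrightarrow L\hookleftarrow M_{A',B'}$, so there are short exact sequences $0\to K\to M_{A,B}\to L\to 0$ and $0\to M_{A',B'}\to L\to C\to 0$. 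By Lemma \ref{lem:Ses-dim-defect}, comparing dimension vectors forces $\Dim K = \Dim M_{A,B}-\Dim L$ and $\Dim C = \Dim L - \Dim M_{A',B'}$, and since $\Dim M_{A,B}=(m,n)$, $\Dim M_{A',B'}=(m',n')$, the module $C$ has dimension vector $(m-?,\,n-n')$-type components while $K$ absorbs the row discrepancy. The point I want to extract is that, after choosing $L$ minimally, one may take $K\cong (m-m')P_0$ and $C\cong (n-n')I_0$, because $P_0=S_1$ and $I_0=S_2$ are the simple modules and any monomorphism/epimorphism can be adjusted so that the cokernel/kernel of the ``size-changing'' maps is semisimple of the appropriate type.

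Concretely, the forward direction: given the subpencil relation, from $M_{A,B}\twoheadrightarrow L$ with $\Dim M_{A,B}-\Dim L$ having second coordinate $0$ (the surjection can be taken to be an isomorphism on the degree-$2$ part, so only $S_1=P_0$ copies are killed), we get $[M_{A,B}]\in\{[?]\}*\{[L]\}$ with the kernel a direct sum of $m-m'$ copies of $P_0$; dually from $M_{A',B'}\hookrightarrow L$ with cokernel concentrated in the $S_2$ direction we get $[L]\in\{[(n-n')I_0]\}*\{[M_{A',B'}]\}$. Chaining these via associativity of the extension monoid product (stated in Section \ref{sec:Morphisms-and-short}) yields $[M_{A,B}]\in\{[(n-n')I_0]\}*\{[M_{A',B'}]\}*\{[(m-m')P_0]\}$, together with the inequalities $m\ge m'$, $n\ge n'$ which are automatic since $K,C$ have non-negative dimension vectors. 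For the converse, the factorization directly produces, by definition of ``$*$'', a module $L$ with $[L]\in\{[M_{A',B'}]\}*\{[(m-m')P_0]\}$ fitting in $0\to (m-m')P_0\to L\to M_{A',B'}\to 0$ — wait, orientation: $[L]\in \{[M_{A',B'}]\}*\{[(m-m')P_0]\}$ gives $0\to (m-m')P_0\to L\to M_{A',B'}\to 0$, so $M_{A',B'}$ is a quotient of $L$, not a sub; I would instead read off $M_{A',B'}\hookrightarrow L$ from the \emph{other} product $[L]\in\{[(n-n')I_0]\}*\{[M_{A',B'}]\}$, which yields $0\to M_{A',B'}\to L\to (n-n')I_0\to 0$, hence $M_{A',B'}\hookrightarrow L$, and $[M_{A,B}]\in\{[(n-n')I_0 \text{ stuff}]\}*\{[(m-m')P_0]\}$-type relation gives $M_{A,B}\twoheadrightarrow L$. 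So $L$ is the linking module and $M_{A',B'}$ is a subfactor of $M_{A,B}$; by Han's equivalence the subpencil relation follows.

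The main obstacle I anticipate is the careful bookkeeping of which short exact sequences arise and in which order the factors $(n-n')I_0$ and $(m-m')P_0$ appear, i.e.\ getting the associativity-based chaining to land exactly on $\{[(n-n')I_0]\}*\{[M_{A',B'}]\}*\{[(m-m')P_0]\}$ rather than some permuted version — and, relatedly, justifying that one may always arrange the kernel of $M_{A,B}\twoheadrightarrow L$ to be $(m-m')P_0$ (equivalently that the cokernel of $M_{A',B'}\hookrightarrow L$ is $(n-n')I_0$) rather than a more complicated module of the same dimension vector. This last point is where I would lean on the structure of $\Mod kK$: any quotient map that changes only the first dimension coordinate by a fixed amount has, after suitable choice, a semisimple kernel $\cong (m-m')S_1=(m-m')P_0$, and symmetrically on the injective side; citing \cite{Han} (who established precisely that the subpencil $\leftrightarrow$ subfactor correspondence works with these simple ``padding'' modules) should close the gap. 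Everything else is formal manipulation of the extension monoid product and Lemma \ref{lem:Ses-dim-defect}.
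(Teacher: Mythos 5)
Your overall strategy coincides with the paper's: invoke Han's subpencil--subfactor correspondence, identify the two short exact sequences encoded by the two extension monoid products, and chain them by associativity. Your treatment of the ``$\Longleftarrow$'' direction is correct (after you fix the orientation of ``$*$'', which you do): the two products give $0\to M_{A^{\prime},B^{\prime}}\to L\to(n-n^{\prime})I_{0}\to0$ and $0\to(m-m^{\prime})P_{0}\to M_{A,B}\to L\to0$, hence $M_{A^{\prime},B^{\prime}}\hookrightarrow L\twoheadleftarrow M_{A,B}$.

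The gap is in the ``$\Longrightarrow$'' direction, exactly at the point you flag but do not resolve. From Han you only get \emph{some} linking module $L^{\prime}$ with $M_{A^{\prime},B^{\prime}}\hookrightarrow L^{\prime}\twoheadleftarrow M_{A,B}$; its dimension vector is not controlled, so the kernel of the epimorphism and the cokernel of the monomorphism need not be sums of $P_{0}$'s and $I_{0}$'s. Your proposed repair --- ``any quotient map that changes only the first dimension coordinate has semisimple kernel'' --- is true but beside the point: any Kronecker module with dimension vector $(k,0)$ is automatically $kS_{1}=kP_{0}$ (and $(0,l)$ gives $lI_{0}$), so the semisimplicity is free \emph{once} you know $\Dim L=(m^{\prime},n)$. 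What actually needs proving is that a linking module with that dimension vector exists, and ``choosing $L$ minimally'' is not an argument for this. The paper closes this by not passing through an abstract subfactor at all: from the strict equivalence $A+\lambda B\sim\bigl(\begin{smallmatrix}A^{\prime}&A_{12}\\A_{21}&A_{22}\end{smallmatrix}\bigr)+\lambda\bigl(\begin{smallmatrix}B^{\prime}&B_{12}\\B_{21}&B_{22}\end{smallmatrix}\bigr)$ it takes $L$ to be the module of the top block-row pencil $\bigl(\begin{smallmatrix}A^{\prime}&A_{12}\end{smallmatrix}\bigr)+\lambda\bigl(\begin{smallmatrix}B^{\prime}&B_{12}\end{smallmatrix}\bigr)$; projecting away the last $m-m^{\prime}$ rows gives the epimorphism $M_{A,B}\twoheadrightarrow L$ with kernel visibly $(m-m^{\prime})P_{0}$, and including the first $n^{\prime}$ columns gives the monomorphism $M_{A^{\prime},B^{\prime}}\hookrightarrow L$ with cokernel visibly $(n-n^{\prime})I_{0}$. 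Adding this two-line construction (or an explicit appeal to the corresponding step inside Han's Proposition~1, rather than just to its statement) would complete your proof.
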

\begin{proof}
First note that
$[M_{A,B}]\in\{[(n-n^{\prime})I_{0}]\}*\{[M_{A^{\prime},B^{\prime}}]\}*\{[(m-m^{\prime})P_{0}]\}$
if and only if $\exists L\in\Mod k K$ such that
$[L]\in\{[(n-n^{\prime})I_{0}]\}*\{[M_{A^{\prime},B^{\prime}}]\}$
and $[M_{A,B}]\in\{[L]\}*\{[(m-m^{\prime})P_{0}]\}$. Also, the
condition $m\ge m^{\prime}$ and $n\ge n^{\prime}$ is clear.

``$\Longleftarrow$'' From $[L]\in\{[(n-n^{\prime})I_{0}]\}*\{[M_{A^{\prime},B^{\prime}}]\}$
and $[M_{A,B}]\in\{[L]\}*\{[(m-m^{\prime})P_{0}]\}$ we deduce the
existence of the short exact sequences
\[
0\to M_{A^{\prime},B^{\prime}}\to L\to(n-n^{\prime})I_{0}\to0
\]
 and
\[
0\to(m-m^{\prime})P_{0}\to M_{A,B}\to L\to0,
\]
 hence there exist an embedding and a projection $M_{A^{\prime},B^{\prime}}\hookrightarrow L\twoheadleftarrow M_{A,B}$,
i.e. $M_{A^{\prime},B^{\prime}}$ a subfactor of $M_{A,B}$ and the
fact that $A^{\prime}+\lambda B^{\prime}$ is a subpencil of $A+\lambda B$
follows.

``$\Longrightarrow$'' If $A^{\prime}+\lambda
B^{\prime}\in\mathcal{M}_{m^{\prime},n^{\prime}}( k[\lambda])$ is a
subpencil of $A+\lambda B\in\mathcal{M}_{m,n}( k[\lambda])$, then
there exist completion matrices $A_{12}+\lambda B_{12}$,
$A_{21}+\lambda B_{21}$, $A_{22}+\lambda B_{22}$ such that
\[
A+\lambda B\sim\left(\begin{array}{cc}
A^{\prime}+\lambda B^{\prime} & A_{12}+\lambda B_{12}\\
A_{21}+\lambda B_{21} & A_{22}+\lambda B_{22}
\end{array}\right)=\tilde{A}+\lambda\tilde{B},
\]
 that is, the modules $M_{A,B}$ and $M_{\tilde{A},\tilde{B}}$ are
isomorphic.

Consider the following short exact sequences, where we have identified
the module $M_{\tilde{A},\tilde{B}}$ with $M_{A,B}$:
\begin{center}
$ \xymatrix@=65pt@R=45pt{ 0\ar[d] & 0\ar[d]\\
\kappa\save[]+<-1.5cm,0.0cm>*{(m-m^\prime)P_0:} \restore
\save[]+<0.6cm,0.08cm>*{{}^{m-m^\prime}}\restore
\ar[d]_{\left(\begin{smallmatrix}0\\\mathbb I
\end{smallmatrix}\right)} & 0\ar[d]\\
\kappa\save[]+<-0.94cm,0.02cm>*{M_{A,B}:} \restore
\save[]+<0.3cm,0.03cm>*{{}^{m}}\restore
\ar[d]_{\left(\begin{smallmatrix}\mathbb I &
0\end{smallmatrix}\right)} &
\kappa\save[]+<0.26cm,0.03cm>*{{}^{n}}\restore\ar[d]^{\mathbb I}\\
\kappa\save[]+<-0.59cm,0.02cm>*{L:} \restore
\save[]+<0.35cm,0.08cm>*{{}^{m^\prime}}\restore \ar[d] &
\kappa\save[]+<0.26cm,0.03cm>*{{}^{n}}\restore\ar[d]\\  0 & 0  \POS
"2,2",\ar@{}"2,1",\ar@<-1ex>"2,1"!E-/25pt/*{} \POS
"2,2",\ar@{}"2,1",\ar@<+1ex>"2,1"!E-/25pt/*{} \POS
"3,2",\ar@{}"3,1",\ar@<-1ex>"3,1"!E-/9pt/*{}_{\left(\begin{smallmatrix}A^{\prime}
& A_{12}\\ A_{21} & A_{22} \end{smallmatrix}\right)} \POS
"3,2",\ar@{}"3,1",\ar@<+1ex>"3,1"!E-/9pt/*{}^{\left(\begin{smallmatrix}B^{\prime}
& B_{12}\\ B_{21} & B_{22} \end{smallmatrix}\right)} \POS
"4,2",\ar@{}"4,1",\ar@<-1ex>"4,1"!E-/9pt/*{}_{\left(\begin{smallmatrix}A^{\prime}
& A_{12}\end{smallmatrix}\right)} \POS
"4,2",\ar@{}"4,1",\ar@<+1ex>"4,1"!E-/9pt/*{}^{\left(\begin{smallmatrix}B^{\prime}
& B_{12}\end{smallmatrix}\right)} } $ \qquad{}\qquad{} $
\xymatrix@=65pt@R=45pt{ 0\ar[d] & 0\ar[d]\\
\kappa\save[]+<-1.07cm,0.02cm>*{M_{A^\prime,B^\prime}:} \restore
\save[]+<0.35cm,0.08cm>*{{}^{m^\prime}}\restore \ar[d]_{\mathbb I} &
\kappa\save[]+<0.31cm,0.08cm>*{{}^{n^\prime}}\restore\ar[d]^{\left(\begin{smallmatrix}\mathbb I\\
0 \end{smallmatrix}\right)}\\  \kappa\save[]+<-0.59cm,0.02cm>*{L:}
\restore \save[]+<0.35cm,0.08cm>*{{}^{m^\prime}}\restore \ar[d] &
\kappa\save[]+<0.26cm,0.03cm>*{{}^{n}}\restore\ar[d]^{\left(\begin{smallmatrix}0
& \mathbb I\end{smallmatrix}\right)}\\
0\save[]+<-1.36cm,0.0cm>*{(n-n^\prime)I_0:} \restore \ar[d] & \kappa
\save[]+<0.51cm,0.08cm>*{{}^{n-n^\prime}}\restore
\ar@<1ex>[l]\ar@<-1ex>[l]\ar[d]\\  0 & 0  \POS
"2,2",\ar@{}"2,1",\ar@<-1ex>"2,1"!E-/9pt/*{}_{A^{\prime}} \POS
"2,2",\ar@{}"2,1",\ar@<+1ex>"2,1"!E-/9pt/*{}^{B^{\prime}} \POS
"3,2",\ar@{}"3,1",\ar@<-1ex>"3,1"!E-/9pt/*{}_{\left(\begin{smallmatrix}A^{\prime}
& A_{12}\end{smallmatrix}\right)} \POS
"3,2",\ar@{}"3,1",\ar@<+1ex>"3,1"!E-/9pt/*{}^{\left(\begin{smallmatrix}B^{\prime}
& B_{12}\end{smallmatrix}\right)} } $
\par\end{center}

As it can be seen from these two short exact sequences, the module
$L$ may be constructed such that $[L]\in\{[(n-n^{\prime})I_{0}]\}*\{[M_{A^{\prime},B^{\prime}}]\}$
and $[M_{A,B}]\in\{[L]\}*\{[(m-m^{\prime})P_{0}]\}$, so $[M_{A,B}]\in\{[(n-n^{\prime})I_{0}]\}*\{[M_{A^{\prime},B^{\prime}}]\}*\{[(m-m^{\prime})P_{0}]\}$
follows immediately.
\end{proof}

\section{Complete solution to the subpencil problem in a particular case}

Let us consider matrix pencils $A+\lambda B$, $A^{\prime}+\lambda
B^{\prime}$ over $ k$, having only minimal indices for columns among
their classical Kronecker invariants. In this case, $A+\lambda
B\sim\diag(L_{\varepsilon_{1}},\dots,L_{\varepsilon_{p}})$ and
$A^{\prime}+\lambda
B^{\prime}\sim\diag(L_{\varepsilon_{1}^{\prime}},\dots,L_{\varepsilon_{q}^{\prime}})$,
where $0\leq\varepsilon_{1}\leq\dots\leq\varepsilon_{p}$ and
$0\leq\varepsilon_{1}^{\prime}\leq\dots\leq\varepsilon_{q}^{\prime}$
are the minimal indices for columns and
\[
L_{\varepsilon}:=\begin{pmatrix}\lambda & 1\\
 & \lambda & 1\\
 &  & \ddots & \ddots\\
 &  &  & \lambda & 1
\end{pmatrix}\in\mathcal{M}_{\varepsilon,\varepsilon+1}( k[\lambda])
\]
are the corresponding blocks on the diagonal (for further details
see \cite{Gant}). Hence, as explained before, one may identify the
pencil $A+\lambda B$ with the preinjective module
$M_{A,B}=I=I_{\varepsilon_{p}}\oplus\dots\oplus
I_{\varepsilon_{1}}\in\Mod k K$ and the pencil $A^{\prime}+\lambda
B^{\prime}$ with the preinjective module
$M_{A^{\prime},B^{\prime}}=I^{\prime}=I_{\varepsilon_{q}^{\prime}}\oplus\dots\oplus
I_{\varepsilon_{1}^{\prime}}\in\Mod k K$. Using this identification,
we have that $A^{\prime}+\lambda B^{\prime}$ is a subpencil of
$A+\lambda B$ if and only if $I^{\prime}$ is a subfactor of $I$,
that is if and only if there exists a Kronecker module $L\in\Mod k
K$ such that $I^{\prime}\hookrightarrow L\twoheadleftarrow I$. We
have the following theorem (where $\lfloor x\rfloor$ denotes the
integer part of $x$):
\begin{thm}
\label{thm:Preinjective-subpencil}If $I^{\prime}=a_{n}I_{n}\oplus\dots\oplus a_{0}I_{0}$
and $I=c_{n}I_{n}\oplus\dots\oplus c_{0}I_{0}$ are preinjective Kronecker
modules, then $I^{\prime}$ is a subfactor of $I$ (i.e. $\exists L$
such that $I^{\prime}\hookrightarrow L\twoheadleftarrow I$) if and
only if
\begin{eqnarray*}
b_{1}\leq\frac{1}{2}\left(\sum_{i=1}^{n}(i+1)c_{i}-\sum_{i=2}^{n}(i+1)b_{i}\right) & \mbox{and} & b_{0}\ge a_{0},
\end{eqnarray*}
 where the sequence $b_{n},\dots,b_{0}$ is defined by the following
(decreasing) recursion:
\[
b_{t}=\begin{cases}
\min(a_{n},c_{n})\vphantom{\biggl(\biggr)} & t=n\\
\left\lfloor \min\left(\frac{\sum_{i=t}^{n}ia_{i}-\sum_{i=t+1}^{n}ib_{i}}{t},\frac{\sum_{i=t}^{n}(i+1)c_{i}-\sum_{i=t+1}^{n}(i+1)b_{i}}{t+1}\right)\right\rfloor \vphantom{\vphantom{\biggl(\biggr)}} & 2\leq t<n\\
\sum_{i=1}^{n}ia_{i}-\sum_{i=2}^{n}ib_{i}\vphantom{\biggl(\biggr)} & t=1\\
\sum_{i=0}^{n}(i+1)c_{i}-\sum_{i=1}^{n}(i+1)b_{i}\vphantom{\biggl(\biggr)}
& t=0
\end{cases}.
\]
 Moreover, in this case the values $b_{n},\dots,b_{0}$ are non-negative
and one of the linking modules is $L=b_{n}I_{n}\oplus\dots\oplus
b_{0}I_{0}$. Note also that in pencil language $L$ is obtained from
$I'$ by column completion and $I$ is obtained from $L$ by row
completion.\end{thm}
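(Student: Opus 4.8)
The plan is to pin down the shape of a linking module, convert the two short exact sequences defining it into numerical inequalities, and solve the resulting combinatorial feasibility problem by a greedy construction that turns out to be exactly the stated recursion.

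First I would use Proposition \ref{thm:Subpencil-iff-I0P0} together with the correspondence between matrix pencils and Kronecker modules: since $I'$ and $I$ have only minimal indices for columns, $I'$ is a subfactor of $I$ if and only if $\Dim I'\le\Dim I$ componentwise and $[I]\in\{[\beta I_{0}]\}*\{[I']\}*\{[\gamma P_{0}]\}$, where $\beta,\gamma$ are forced by the dimensions. By associativity of the extension monoid product this says there is a module $L$ fitting into $0\to I'\to L\to\beta I_{0}\to0$ and $0\to\gamma P_{0}\to I\to L\to0$. Being a quotient of the preinjective $I$, such an $L$ is preinjective; writing $b_{i}$ for the multiplicity of $I_{i}$ in $L$, comparing dimensions along the two sequences forces $\sum_{i}ib_{i}=\sum_{i}ia_{i}$ and $\sum_{i}(i+1)b_{i}=\sum_{i}(i+1)c_{i}$. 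The commutative diagrams used in the proof of Proposition \ref{thm:Subpencil-iff-I0P0} also realize, on the level of pencils, $L$ as a column completion of $I'$ and $I$ as a row completion of $L$, which is the last sentence of the theorem.

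Next I would translate. For $0\to I'\to L\to\beta I_{0}\to0$, the exact-sequence form of Remark \ref{rem:Preinjective-embedding-multiplicatively} gives $b_{m}=0$ for $m>n$, $a_{0}\le b_{0}$, $\sum_{i=1}^{t}ia_{i}\le\sum_{i=1}^{t}ib_{i}$ for $t<n$ and $\sum_{i=1}^{n}ia_{i}=\sum_{i=1}^{n}ib_{i}$; in particular $L=b_{n}I_{n}\oplus\dots\oplus b_{0}I_{0}$. For $0\to\gamma P_{0}\to I\to L\to0$, Corollary \ref{cor:P0II-III0-ses-equivalence} replaces the sequence by $0\to\mathrm{sh}(I)\to\mathrm{sh}(L)\to\gamma I_{0}\to0$, where $\mathrm{sh}$ raises every index by one, and Remark \ref{rem:Preinjective-embedding-multiplicatively} applied to that gives $\sum_{i=0}^{s}(i+1)c_{i}\le\sum_{i=0}^{s}(i+1)b_{i}$ for $s<n$ and $\sum_{i=0}^{n}(i+1)c_{i}=\sum_{i=0}^{n}(i+1)b_{i}$. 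Using the two equalities to pass to tails, the whole system becomes: the upper bounds $\sum_{i=t}^{n}ib_{i}\le\sum_{i=t}^{n}ia_{i}$ and $\sum_{i=t}^{n}(i+1)b_{i}\le\sum_{i=t}^{n}(i+1)c_{i}$ for $t=2,\dots,n$, which involve only $b_{2},\dots,b_{n}$; the forced value $b_{1}=\sum_{i=1}^{n}ia_{i}-\sum_{i=2}^{n}ib_{i}$, which the $t=2$ bound makes $\ge a_{1}\ge0$; the forced value $b_{0}=\sum_{i=0}^{n}(i+1)c_{i}-\sum_{i=1}^{n}(i+1)b_{i}$; and two residual inequalities, the $t=1$ instance of the second family (which after eliminating $b_{1}$ is exactly $b_{1}\le\tfrac12(\sum_{i=1}^{n}(i+1)c_{i}-\sum_{i=2}^{n}(i+1)b_{i})$) and $a_{0}\le b_{0}$. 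A short computation rewrites both residual inequalities as lower bounds for the nonnegative quantity $\sum_{i=2}^{n}(i-1)b_{i}$, and shows $b_{0}\ge c_{0}\ge 0$ once the first one holds, so all $b_{i}$ are then nonnegative.

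It remains to decide whether nonnegative integers $b_{2},\dots,b_{n}$ obeying the upper bounds can make $\sum_{i=2}^{n}(i-1)b_{i}$ as large as required. Here I would check that the recursion in the statement is precisely the greedy rule ``pick $b_{n},b_{n-1},\dots,b_{2}$ in turn, each as large as the upper bounds then permit'', that this is well defined with nonnegative entries (a downward induction keeps both arguments of the floor nonnegative using the bounds already imposed), and that this greedy choice maximizes $\sum_{i=2}^{n}(i-1)b_{i}$ over the whole feasible set; granting that, a valid $L$ exists iff the greedy $b$ meets the two residual inequalities, i.e. iff the two displayed conditions hold, and $L=b_{n}I_{n}\oplus\dots\oplus b_{0}I_{0}$ is the desired linking module. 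The main obstacle is exactly this optimality of the greedy rule. At the level of real relaxations it is transparent: if the topmost not-yet-saturated coordinate $b_{i}$ were not maximal, decreasing it by $\delta$ and enlarging some $b_{j}$ with $j<i$ (which to stay feasible must grow by more than $\delta$) changes the objective by $\delta(i/j-1)>0$, contradicting optimality, so the greedy point is the optimum. Upgrading this to integers, where the floors interact with the two families of tail constraints, is the delicate step and will require a careful exchange argument — comparing, level by level, how far the greedy tails $\sum_{i\ge t}ib_{i}$ and $\sum_{i\ge t}(i+1)b_{i}$ fall below their bounds — to rule out that a different integral allocation of the ``budget'' among $b_{2},\dots,b_{n}$ beats it. Everything else is bookkeeping with Proposition \ref{thm:Subpencil-iff-I0P0}, Corollary \ref{cor:P0II-III0-ses-equivalence} and Remark \ref{rem:Preinjective-embedding-multiplicatively}.
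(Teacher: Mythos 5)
Your outline follows the paper's proof almost exactly up to the decisive step: reduce via Proposition~\ref{thm:Subpencil-iff-I0P0} and associativity to the existence of a preinjective $L$ with $[L]\in\{[\beta I_{0}]\}*\{[I^{\prime}]\}$ and $[I]\in\{[L]\}*\{[\alpha P_{0}]\}$, translate both conditions via Remark~\ref{rem:Preinjective-embedding-multiplicatively} and Corollary~\ref{cor:P0II-III0-ses-equivalence} into a system of weighted tail inequalities on the multiplicities $u_{0},\dots,u_{n}$ of $L$, observe that $u_{1},u_{0}$ are forced by the two equalities, and isolate the two residual conditions. All of that matches the paper. But the step you flag yourself as ``the main obstacle'' --- that the greedy choice is genuinely optimal over integer solutions subject to the two interacting families of floor-constrained tail bounds --- is exactly the technical heart of the paper's argument, and you do not supply it. Your real-relaxation exchange does not carry over: because there are two incommensurable weight families ($i$ and $i+1$), a local ``move mass downward'' exchange with a single compensating variable does not respect both tail systems simultaneously, so the one-variable-for-one-variable swap you sketch is not the right move, and the integrality issue (the floors) compounds this.

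What closes the gap in the paper is a specific three-term integral exchange: if $u$ is a feasible integer point and $t$ is the largest index with $u_{t}\neq b_{t}$, set $d=b_{t}-u_{t}>0$ and replace $(u_{t-2},u_{t-1},u_{t})$ by $(u_{t-2}+d,\,u_{t-1}-2d,\,u_{t}+d)$, leaving all other coordinates unchanged. This move is integral, preserves both equalities (since $(t-2)-2(t-1)+t=0$ and $(t-1)-2t+(t+1)=0$), fixes the tail sums $\sum_{i\ge s}iu_{i}$ and $\sum_{i\ge s}(i+1)u_{i}$ for $s\le t-2$, only decreases them for $s=t-1$, and at $s=t$ lands exactly on the greedy values, which satisfy the bounds by construction of $b_{t}$. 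Iterating yields $(b_{n},\dots,b_{2})$ and hence $(b_{1},b_{0})$ as another feasible point, proving the two residual conditions must hold for the greedy sequence whenever any solution exists. You need this (or an equivalent integral exchange) spelled out; without it, the ``$\Longrightarrow$'' direction is only a plan, not a proof. The rest of your sketch --- the use of the two defining short exact sequences, the identification of $L$ as preinjective, the interpretation of the two forced linear equalities, the observation that the residuals amount to lower bounds on $b_{0}$, the pencil-language remark about column/row completion --- is correct and in line with the paper.
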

\begin{proof}
First we show that $b_{n},\dots,b_{1}\ge0$. For
$b_{n}=\min(a_{n},c_{n})$, the inequality $b_{n}\ge0$ holds. Suppose
$b_{t}\ge0$ holds for all $t,$ where $l\leq t\leq n$,
$l\in\{2,\dots,n\}$. We are going to show that $b_{t-1}\ge0$ is also
true. Since
\[
b_{t}=\left\lfloor
\min\left(\frac{\sum_{i=t}^{n}ia_{i}-\sum_{i=t+1}^{n}ib_{i}}{t},\frac{\sum_{i=t}^{n}(i+1)c_{i}-\sum_{i=t+1}^{n}(i+1)b_{i}}{t+1}\right)\right\rfloor
,
\]
 follows that $0\leq tb_{t}\leq\sum_{i=t}^{n}ia_{i}-\sum_{i=t+1}^{n}ib_{i}$.
So
$\sum_{i=t-1}^{n}ia_{i}-\sum_{i=t}^{n}ib_{i}=(t-1)a_{t-1}+\sum_{i=t}^{n}ia_{i}-\sum_{i=t+1}^{n}ib_{i}-tb_{t}\ge(t-1)a_{t-1}\ge0$.
In the same way
$\sum_{i=t-1}^{n}(i+1)c_{i}-\sum_{i=t}^{n}(i+1)b_{i}=tc_{t-1}+\sum_{i=t}^{n}(i+1)c_{i}-\sum_{i=t+1}^{n}(i+1)b_{i}-(t+1)b_{t}\ge
tc_{t-1}\ge0$, so $b_{t-1}\ge0$.

From Proposition \ref{thm:Subpencil-iff-I0P0} we know that $I^{\prime}$
is a subfactor of $I$ if and only if $[I]\in\{[\beta
I_{0}]\}*\{[I^{\prime}]\}*\{[\alpha P_{0}]\}$ for some
$\alpha,\beta\in\mathbb{N}$. The extension monoid product is
associative, so $[I]\in\{[\beta I_{0}]\}*\{[I^{\prime}]\}*\{[\alpha
P_{0}]\}$ if and only if $[I]\in\{[L]\}*\{[\alpha P_{0}]\}$ for some
$L\in\Mod k K$, where $[L]\in\{[\beta I_{0}]\}*\{[I^{\prime}]\}$.
Since $[L]\in\{[\beta I_{0}]\}*\{[I^{\prime}]\}$ and $I^{\prime}$ is
preinjective, it results from Lemma \ref{lem:ext-closed} that
$L\in\Mod k K$ must also be preinjective.

Let us use now the multiplicative notation for $L$ as well, that
is, let use suppose $L=\dots\oplus u_{n}I_{n}\oplus\dots\oplus u_{0}I_{0}$
(we have no requirement for $u_{n}$ to be non-zero).

On one hand we have $[L]\in\{[\beta I_{0}]\}*\{[I^{\prime}]\}$ (with
$\beta$ arbitrary) if and only if we have the short exact sequence
$0\to I^{\prime}\to L\to\beta I_{0}\to0$, that is if and only if
$u_{m}=0$ for all $m>n$ and the following is true (see Remark \ref{rem:Preinjective-embedding-multiplicatively}):
\begin{eqnarray*}
a_{0} & \leq & u_{0}\\
a_{1} & \leq & u_{1}\\
a_{1}+2a_{2} & \leq & u_{1}+2u_{2}\\
 & \vdots\\
a_{1}+2a_{2}+...+(n-1)a_{n-1} & \leq & u_{1}+2u_{2}+...+(n-1)u_{n-1}\\
a_{1}+2a_{2}+...+(n-1)a_{n-1}+na_{n} & = & u_{1}+2u_{2}+...+(n-1)u_{n-1}+nu_{n}.
\end{eqnarray*}
 On the other hand, using Corollary \ref{cor:P0II-III0-ses-equivalence},
we have $[I]\in\{[L]\}*\{[\alpha P_{0}]\}$ if and only if $[L^{(+1)}]\in\{[\alpha I_{0}]\}*\{[I^{(+1)}]\}$,
if and only if we have the short exact sequence $0\to I^{(+1)}\to L^{(+1)}\to\alpha I_{0}\to0$,
where $L^{(+1)}=u_{n}I_{n+1}\oplus\dots\oplus u_{0}I_{1}$ and $I^{(+1)}=c_{n}I_{n+1}\oplus\dots\oplus c_{0}I_{1}$.
So this condition translates to
\begin{eqnarray*}
c_{0} & \leq & u_{0}\\
c_{0}+2c_{1} & \leq & u_{0}+2u_{1}\\
 & \vdots\\
c_{0}+2c_{1}+\dots+nc_{n-1} & \leq & u_{0}+2u_{1}+\dots+nu_{n-1}\\
c_{0}+2c_{1}+\dots+nc_{n-1}+(n+1)c_{n} & = & u_{0}+2u_{1}+\dots+nu_{n-1}+(n+1)u_{n}.
\end{eqnarray*}
 By extracting the inequalities from the last equality in both systems
and coupling them together we get that $[I]\in\{[\beta I_{0}]\}*\{[I^{\prime}]\}*\{[\alpha P_{0}]\}$
(with arbitrary $\alpha$ and $\beta$) if and only if there exist
non-negative integers $u_{0},u_{1},\dots,u_{n}$ such that the following
system is satisfied:
\begin{eqnarray*}
u_{0} & \ge & a_{0}\\
nu_{n} & \leq & na_{n}\\
(n-1)u_{n-1}+nu_{n} & \leq & (n-1)a_{n-1}+na_{n}\\
 & \vdots\\
2u_{2}+...+(n-1)u_{n-1}+nu_{n} & \leq & 2a_{2}+...+(n-1)a_{n-1}+na_{n}\\
u_{1}+2u_{2}+...+(n-1)u_{n-1}+nu_{n} & = & a_{1}+2a_{2}+...+(n-1)a_{n-1}+na_{n}\\
u_{0}+2u_{1}+\dots+nu_{n-1}+(n+1)u_{n} & = & c_{0}+2c_{1}+\dots+nc_{n-1}+(n+1)c_{n}\\
2u_{1}+\dots+nu_{n-1}+(n+1)u_{n} & \leq & 2c_{1}+\dots+nc_{n-1}+(n+1)c_{n}\\
 & \vdots\\
nu_{n-1}+(n+1)u_{n} & \leq & nc_{n-1}+(n+1)c_{n}\\
(n+1)u_{n} & \leq & (n+1)c_{n}.
\end{eqnarray*}
 Going further, we can write the system in the following equivalent
form:
\begin{eqnarray*}
u_{n} & \leq & \min(a_{n},c_{n})\\
u_{n-1} & \leq & \min\left(\frac{(n-1)a_{n-1}+na_{n}-nu_{n}}{n-1},\frac{nc_{n-1}+(n+1)c_{n}-(n+1)u_{n}}{n}\right)\\
 & \vdots\\
u_{2} & \leq & \min\left(\frac{\sum_{i=2}^{n}ia_{i}-\sum_{i=3}^{n}iu_{i}}{2},\frac{\sum_{i=2}^{n}(i+1)c_{i}-\sum_{i=3}^{n}(i+1)u_{i}}{3}\right)\\
u_{1} & = & \sum_{i=1}^{n}ia_{i}-\sum_{i=2}^{n}iu_{i}\leq\frac{1}{2}\left(\sum_{i=1}^{n}(i+1)c_{i}-\sum_{i=2}^{n}(i+1)u_{i}\right)\\
u_{0} & = & \sum_{i=0}^{n}(i+1)c_{i}-\sum_{i=1}^{n}(i+1)u_{i}\ge a_{0}
\end{eqnarray*}

``$\Longleftarrow$'' We have seen that the recursive definition
of the values $b_{n},\dots,b_{0}$ assure $b_{n},\dots,b_{1}\ge0$.
Moreover, if the inequalities
\begin{eqnarray*}
b_{1}\leq\frac{1}{2}\left(\sum_{i=1}^{n}(i+1)c_{i}-\sum_{i=2}^{n}(i+1)b_{i}\right) & \mbox{and} & b_{0}\ge a_{0}
\end{eqnarray*}
 are also satisfied, then we can take $(u_{0},u_{1},\dots,u_{n})=(b_{0},b_{1},\dots,b_{n})$,
which is a non-negative integer solution for the system.

``$\Longrightarrow$'' Suppose that there exists
$u_{0},u_{1},\dots,u_{n}\in\mathbb{N}$ such that the previous system
is satisfied (note that if $u_{2},\dots,u_{n}$ are chosen, then
$u_{0}$ and $u_{1}$ are determined) and consider the definition of
the sequence $b_{0},b_{1},\dots,b_{n}$ from the statement of the
theorem. If $(u_{2},u_{3},\dots,u_{n})\neq(b_{2},b_{3},\dots,b_{n})$
then let $t\in\{2,\dots,n\}$ be the greatest index such that
$u_{t}\neq b_{t}$. Then we must have
$u_{n}=b_{n},\dots,u_{t+1}=b_{t+1}$ and $u_{t}<b_{t}$. Let us denote
$d=b_{t}-u_{t}>0$. We perform the following change of variables:
$u_{0}^{\prime}=u_{0},\dots,u_{t-3}^{\prime}=u_{t-3}$,
$u_{t-2}^{\prime}=u_{t-2}+d$, $u_{t-1}^{\prime}=u_{t-1}-2d$,
$u_{t}^{\prime}=u_{t}+d$,
$u_{t+1}^{\prime}=u_{t+1},\dots,u_{n}^{\prime}=u_{n}$. Direct
calculations show that
$u_{0}^{\prime},u_{1}^{\prime},\dots,u_{n}^{\prime}\in\mathbb{Z}$
also satisfy the system, moreover
$u_{n}^{\prime}=b_{n},\dots,u_{t}^{\prime}=b_{t}$. Repeating this
process we find the following integer solution of the system:
$u_{n}^{\dprime}=b_{n},\dots,u_{2}^{\dprime}=b_{2},u_{1}^{\dprime},u_{0}^{\dprime}$.
The last two equations in the system guarantee that in fact
$u_{1}^{\dprime}=b_{1}$ and $u_{0}^{\dprime}=b_{0}$ hence they
satisfy the two inequalities from the statement of the theorem.

From the proof above one can see that a possible linking module is
$L=b_{n}I_{n}\oplus\dots\oplus b_{0}I_{0}$.\end{proof}
\begin{rem}
Theorem \ref{thm:Preinjective-subpencil} does not change if we take
preprojective modules instead of preinjectives.\end{rem}
\begin{example}
Consider the following matrix pencils over $\mathbb{C}$ written in
canonical diagonal form and having only minimal indices for columns
among their classical Kronecker invariants:
\[
A+\lambda B=\left(\begin{smallmatrix}\lambda & 1 & 0 & 0\\
0 & \lambda & 1 & 0\\
0 & 0 & \lambda & 1\\
 &  &  &  & \lambda & 1 & 0 & 0\\
 &  &  &  & 0 & \lambda & 1 & 0\\
 &  &  &  & 0 & 0 & \lambda & 1\\
 &  &  &  &  &  &  &  & \lambda & 1 & 0 & 0\\
 &  &  &  &  &  &  &  & 0 & \lambda & 1 & 0\\
 &  &  &  &  &  &  &  & 0 & 0 & \lambda & 1\\
 &  &  &  &  &  &  &  &  &  &  &  & \lambda & 1
\end{smallmatrix}\right)\in\mathcal{M}_{10,14}(\mathbb{C}[\lambda])
\]
 and
\[
A^{\prime}+\lambda B^{\prime}=\left(\begin{smallmatrix}0 & \lambda & 1 & 0 & 0 & 0 & 0\\
 & 0 & \lambda & 1 & 0 & 0 & 0\\
 & 0 & 0 & \lambda & 1 & 0 & 0\\
 & 0 & 0 & 0 & \lambda & 1 & 0\\
 & 0 & 0 & 0 & 0 & \lambda & 1\\
 &  &  &  &  &  &  & \lambda & 1 & 0\\
 &  &  &  &  &  &  & 0 & \lambda & 1\\
 &  &  &  &  &  &  &  &  &  & \lambda & 1
\end{smallmatrix}\right)\in\mathcal{M}_{8,12}(\mathbb{C}[\lambda]).
\]
 The pencil $A+\lambda B$ has $\varepsilon_{1}=\varepsilon_{2}=\varepsilon_{3}=3$,
and $\varepsilon_{4}=1$ as its minimal indices for columns, while
in the case of the pencil $A^{\prime}+\lambda B^{\prime}$ these are
$\varepsilon_{1}^{\prime}=0$, $\varepsilon_{2}^{\prime}=5$, $\varepsilon_{3}^{\prime}=2$
and $\varepsilon_{4}^{\prime}=1$. Hence the corresponding modules
are $M_{A,B}=I_{3}\oplus I_{3}\oplus I_{3}\oplus I_{1}$ and $M_{A^{\prime},B^{\prime}}=I_{5}\oplus I_{2}\oplus I_{1}\oplus I_{0}$.
Written using the multiplicative notation used in Theorem \ref{thm:Preinjective-subpencil},
$M_{A^{\prime},B^{\prime}}=\bigoplus_{i=0}^{5}a_{i}I_{i}$ and $M_{A,B}=\bigoplus_{i=0}^{5}c_{i}I_{i}$,
where $(a_{0},a_{1},\dots,a_{5})=(1,1,1,0,0,1)$ and $(c_{0},c_{1},\dots,c_{5})=(0,1,0,3,0,0)$.
We use the recursive formula from the theorem to compute the sequence
$(b_{0},b_{1},\dots,b_{5})=(2,1,2,1,0,0)$ and to find out that the
inequalities
\begin{eqnarray*}
b_{1}\leq\frac{1}{2}\left(\sum_{i=1}^{5}(i+1)c_{i}-\sum_{i=2}^{5}(i+1)b_{i}\right) & \mbox{and} & b_{0}\ge a_{0}
\end{eqnarray*}
 are satisfied. So $A^{\prime}+\lambda B^{\prime}$ is a subpencil
of $A+\lambda B$ or equivalently, $M_{A^{\prime},B^{\prime}}$ is
a subfactor of $M_{A,B}$, i.e. $\exists L$ such that $M_{A^{\prime},B^{\prime}}\hookrightarrow L\twoheadleftarrow M_{A,B}$.
Moreover, we can take the linking module $L$ to be $L=\bigoplus_{i=0}^{5}b_{i}I_{i}=I_{3}\oplus I_{2}\oplus I_{2}\oplus I_{1}\oplus I_{0}\oplus I_{0}$.
We could use at this point Theorem \ref{thm:Preinjective-embedding}
to verify the existence of the embedding $M_{A^{\prime},B^{\prime}}\hookrightarrow L$
and Corollary \ref{cor:P0II-III0-ses-equivalence} to verify the existence
of the projection $L\twoheadleftarrow M_{A,B}$ with the kernel equal
to $2P_{0}$. The matrix pencil corresponding to the module $L$ is
\[
L_{1}+\lambda L_{2}=\left(\begin{smallmatrix}0 & 0 & \lambda & 1 & 0 & 0\\
 &  & 0 & \lambda & 1 & 0\\
 &  & 0 & 0 & \lambda & 1\\
 &  &  &  &  &  & \lambda & 1 & 0\\
 &  &  &  &  &  & 0 & \lambda & 1\\
 &  &  &  &  &  &  &  &  & \lambda & 1 & 0\\
 &  &  &  &  &  &  &  &  & 0 & \lambda & 1\\
 &  &  &  &  &  &  &  &  &  &  &  & \lambda & 1
\end{smallmatrix}\right)\in\mathcal{M}_{8,14}(\mathbb{C}[\lambda]).
\]

Let us construct now the completion matrices $A_{12}+\lambda B_{12}$,
$A_{21}+\lambda B_{21}$, $A_{22}+\lambda B_{22}$, i.e. those matrix
blocks for which the following equivalence holds:
\[
A+\lambda B\sim\left(\begin{array}{cc}
A^{\prime}+\lambda B^{\prime} & A_{12}+\lambda B_{12}\\
A_{21}+\lambda B_{21} & A_{22}+\lambda B_{22}
\end{array}\right).
\]

Since we have an embedding $M_{A^{\prime},B^{\prime}}\overset{f}{\hookrightarrow}L$,
we must have $f=(F_{1},F_{2})$, where $F_{1}\in\mathcal{M}_{14,12}(\mathbb{C})$
and $F_{2}\in\mathcal{M}_{8}(\mathbb{C})$ are full-rank matrices
such that $(L_{1}+\lambda L_{2})F_{1}=F_{2}(A^{\prime}+\lambda B^{\prime})$.
Also, for the projection $M_{A,B}\overset{g}{\twoheadrightarrow}L$,
we have $g=(G_{1},G_{2})$, where $G_{1}\in\mathcal{M}_{14}(\mathbb{C})$
and $G_{2}\in\mathcal{M}_{10,8}(\mathbb{C})$ are full-rank matrices
such that $(L_{1}+\lambda L_{2})G_{1}=G_{2}(A+\lambda B)$. Using
the method detailed at the end of Section \ref{sec:Morphisms-and-short}
we can construct these matrices as: \[ \setlength{\dashlinegap}{2pt}  F_{1}= \left( \begin{array}{c:cccccc:ccc:cc}
0 & 0 & 0 & 0 & 1 & 0 & 0 & 0 & 0 & 0 & 0 & 0\\

1 & 0 & 0 & 0 & 0 & 0 & 0 & 0 & 0 & 0 & 0 & 0\\ \hdashline

{\color{lightgray}0} & 1 & 0 & 0 & {\color{lightgray}0} & {\color{lightgray}0} & {\color{lightgray}0} & {\color{lightgray}0} & {\color{lightgray}0} & {\color{lightgray}0} & {\color{lightgray}0} & {\color{lightgray}0}\\

{\color{lightgray}0} & {\color{lightgray}0} & 1 & 0 & 0 & {\color{lightgray}0} & {\color{lightgray}0} & {\color{lightgray}0} & {\color{lightgray}0} & {\color{lightgray}0} & {\color{lightgray}0} & {\color{lightgray}0}\\

{\color{lightgray}0} & {\color{lightgray}0} & {\color{lightgray}0} & 1 & 0 & 0 & {\color{lightgray}0} & {\color{lightgray}0} & {\color{lightgray}0} & {\color{lightgray}0} & {\color{lightgray}0} & {\color{lightgray}0}\\

{\color{lightgray}0} & {\color{lightgray}0} & {\color{lightgray}0} & {\color{lightgray}0} & 1 & 0 & 0 & {\color{lightgray}0} & {\color{lightgray}0} & {\color{lightgray}0} & {\color{lightgray}0} & {\color{lightgray}0}\\ \hdashline

{\color{lightgray}0} & 0 & 0 & 0 & 1 & {\color{lightgray}0} & {\color{lightgray}0} & 0 & {\color{lightgray}0} & {\color{lightgray}0} & {\color{lightgray}0} & {\color{lightgray}0}\\

{\color{lightgray}0} & {\color{lightgray}0} & 0 & 0 & 0 & 1 & {\color{lightgray}0} & {\color{lightgray}0} & 0 & {\color{lightgray}0} & {\color{lightgray}0} & {\color{lightgray}0}\\

{\color{lightgray}0} & {\color{lightgray}0} & {\color{lightgray}0} & 0 & 0 & 0 & 1 & {\color{lightgray}0} & {\color{lightgray}0} & 0 & {\color{lightgray}0} & {\color{lightgray}0}\\ \hdashline

{\color{lightgray}0} & 0 & 0 & 0 & 0 & {\color{lightgray}0} & {\color{lightgray}0} & 1 & {\color{lightgray}0} & {\color{lightgray}0} & {\color{lightgray}0} & {\color{lightgray}0}\\

{\color{lightgray}0} & {\color{lightgray}0} & 0 & 0 & 0 & 0 & {\color{lightgray}0} & {\color{lightgray}0} & 1 & {\color{lightgray}0} & {\color{lightgray}0} & {\color{lightgray}0}\\

{\color{lightgray}0} & {\color{lightgray}0} & {\color{lightgray}0} & 0 & 0 & 0 & 0 & {\color{lightgray}0} & {\color{lightgray}0} & 1 & {\color{lightgray}0} & {\color{lightgray}0}\\ \hdashline

{\color{lightgray}0} & 0 & 0 & 0 & 0 & 0 & {\color{lightgray}0} & 0 & 0 & {\color{lightgray}0} & 1 & {\color{lightgray}0}\\

{\color{lightgray}0} & {\color{lightgray}0} & 0 & 0 & 0 & 0 & 0 & {\color{lightgray}0} & 0 & 0 & {\color{lightgray}0} & 1

\end{array}  \right)  \quad\mbox{and}\quad F_{2}=\left( \begin{array}{ccccc:cc:c}

1 & 0 & 0 & {\color{lightgray}0} & {\color{lightgray}0} & {\color{lightgray}0} & {\color{lightgray}0} & {\color{lightgray}0} \\
{\color{lightgray}0} & 1 & 0 & 0 & {\color{lightgray}0} & {\color{lightgray}0} & {\color{lightgray}0} & {\color{lightgray}0} \\
{\color{lightgray}0} & {\color{lightgray}0} & 1 & 0 & 0 & {\color{lightgray}0} & {\color{lightgray}0} & {\color{lightgray}0} \\ \hdashline

0 & 0 & 0 & 1 & {\color{lightgray}0} & 0 & {\color{lightgray}0} & {\color{lightgray}0} \\
{\color{lightgray}0} & 0 & 0 & 0 & 1 & {\color{lightgray}0} & 0 & {\color{lightgray}0} \\ \hdashline

0 & 0 & 0 & 0 & {\color{lightgray}0} & 1 & {\color{lightgray}0} & {\color{lightgray}0} \\
{\color{lightgray}0} & 0 & 0 & 0 & 0 & {\color{lightgray}0} & 1 & {\color{lightgray}0} \\ \hdashline

0 & 0 & 0 & 0 & 0 & 0 & 0 & 1

\end{array}\right) = \mathbb I_{8}, \] where $\mathbb I_{8}$ is the $8\times8$ identity matrix and \[ \setlength{\dashlinegap}{2pt} G_{1}= \left( \begin{array}{cccc:cccc:cccc:cc}

0 & 0 & 0 & 0 & 0 & 0 & 0 & 1 & 0 & 0 & 0 & 0 & 0 & 0\\

0 & 0 & 0 & 0 & 0 & 0 & 0 & 0 & 1 & 0 & 0 & 0 & 0 & 0\\ \hdashline

1 & {\color{lightgray}0} & {\color{lightgray}0} & {\color{lightgray}0} & 0 & {\color{lightgray}0} & {\color{lightgray}0} & {\color{lightgray}0} & 0 & {\color{lightgray}0} & {\color{lightgray}0} & {\color{lightgray}0} & {\color{lightgray}0} & {\color{lightgray}0}\\

{\color{lightgray}0} & 1 & {\color{lightgray}0} & {\color{lightgray}0} & {\color{lightgray}0} & 0 & {\color{lightgray}0} & {\color{lightgray}0} & {\color{lightgray}0} & 0 & {\color{lightgray}0} & {\color{lightgray}0} & {\color{lightgray}0} & {\color{lightgray}0}\\

{\color{lightgray}0} & {\color{lightgray}0} & 1 & {\color{lightgray}0} & {\color{lightgray}0} & {\color{lightgray}0} & 0 & {\color{lightgray}0} & {\color{lightgray}0} & {\color{lightgray}0} & 0 & {\color{lightgray}0} & {\color{lightgray}0} & {\color{lightgray}0}\\

{\color{lightgray}0} & {\color{lightgray}0} & {\color{lightgray}0} & 1 & {\color{lightgray}0} & {\color{lightgray}0} & {\color{lightgray}0} & 0 & {\color{lightgray}0} & {\color{lightgray}0} & {\color{lightgray}0} & 0 & {\color{lightgray}0} & {\color{lightgray}0}\\ \hdashline

0 & 0 & {\color{lightgray}0} & {\color{lightgray}0} & 1 & 0 & {\color{lightgray}0} & {\color{lightgray}0} & 0 & 0 & {\color{lightgray}0} & {\color{lightgray}0} & {\color{lightgray}0} & {\color{lightgray}0}\\

{\color{lightgray}0} & 0 & 0 & {\color{lightgray}0} & {\color{lightgray}0} & 1 & 0 & {\color{lightgray}0} & {\color{lightgray}0} & 0 & 0 & {\color{lightgray}0} & {\color{lightgray}0} & {\color{lightgray}0}\\

{\color{lightgray}0} & {\color{lightgray}0} & 0 & 0 & {\color{lightgray}0} & {\color{lightgray}0} & 1 & 0 & {\color{lightgray}0} & {\color{lightgray}0} & 0 & 0 & {\color{lightgray}0} & {\color{lightgray}0}\\ \hdashline

0 & 0 & {\color{lightgray}0} & {\color{lightgray}0} & 0 & 0 & {\color{lightgray}0} & {\color{lightgray}0} & 0 & 1 & {\color{lightgray}0} & {\color{lightgray}0} & {\color{lightgray}0} & {\color{lightgray}0}\\

{\color{lightgray}0} & 0 & 0 & {\color{lightgray}0} & {\color{lightgray}0} & 0 & 0 & {\color{lightgray}0} & {\color{lightgray}0} & 0 & 1 & {\color{lightgray}0} & {\color{lightgray}0} & {\color{lightgray}0}\\

{\color{lightgray}0} & {\color{lightgray}0} & 0 & 0 & {\color{lightgray}0} & {\color{lightgray}0} & 0 & 0 & {\color{lightgray}0} & {\color{lightgray}0} & 0 & 1 & {\color{lightgray}0} & {\color{lightgray}0}\\ \hdashline

0 & 0 & 0 & {\color{lightgray}0} & 0 & 0 & 0 & {\color{lightgray}0} & 0 & 0 & 0 & {\color{lightgray}0} & 1 & {\color{lightgray}0}\\

{\color{lightgray}0} & 0 & 0 & 0 & {\color{lightgray}0} & 0 & 0 & 0 & {\color{lightgray}0} & 0 & 0 & 0 & {\color{lightgray}0} & 1

\end{array} \right)\textrm{, } G_{2}=\left(\begin{array}{ccc:ccc:ccc:c}

1 & {\color{lightgray}0} & {\color{lightgray}0} & 0 & {\color{lightgray}0} & {\color{lightgray}0} & 0 & {\color{lightgray}0} & {\color{lightgray}0} & {\color{lightgray}0}\\

{\color{lightgray}0} & 1 & {\color{lightgray}0} & {\color{lightgray}0} & 0 & {\color{lightgray}0} & {\color{lightgray}0} & 0 & {\color{lightgray}0} & {\color{lightgray}0}\\

{\color{lightgray}0} & {\color{lightgray}0} & 1 & {\color{lightgray}0} & {\color{lightgray}0} & 0 & {\color{lightgray}0} & {\color{lightgray}0} & 0 & {\color{lightgray}0}\\ \hdashline

0 & 0 & {\color{lightgray}0} & 1 & 0 & {\color{lightgray}0} & 0 & 0 & {\color{lightgray}0} & {\color{lightgray}0}\\

{\color{lightgray}0} & 0 & 0 & {\color{lightgray}0} & 1 & 0 & {\color{lightgray}0} & 0 & 0 & {\color{lightgray}0}\\ \hdashline

0 & 0 & {\color{lightgray}0} & 0 & 0 & {\color{lightgray}0} & 0 & 1 & {\color{lightgray}0} & {\color{lightgray}0}\\

{\color{lightgray}0} & 0 & 0 & {\color{lightgray}0} & 0 & 0 & {\color{lightgray}0} & 0 & 1 & {\color{lightgray}0}\\ \hdashline

0 & 0 & 0 & 0 & 0 & 0 & 0 & 0 & 0 & 1

\end{array}\right). \] We have chosen the elements in the blocks according to the equations
\ref{eq:h} and \ref{eq:g}, respectively. In the matrices above we
have marked by gray elements which must be zero and used black where
we had a choice for the elements.

From now on we work along the proof of Proposition 1. in \cite{Han}.

Since $F_{2}$ and $G_{1}$ are full-rank square matrices, they are
invertible. In our case $F_{2}^{-1}=F_{2}$ and $G_{1}^{-1}=G_{1}^{\intercal}$.
The matrices $G_{1}^{-1}F_{1}$ and $F_{2}^{-1}G_{2}$ are also full-rank
matrices, so there are non-singular square matrices $C_{1}$, $C_{2}$,
$D_{1}$ and $D_{2}$ such that $G_{1}^{-1}F_{1}=C_{1}\begin{pmatrix}\mathbb I_{12}\\
0
\end{pmatrix}C_{2}$ and $F_{2}^{-1}G_{2}=D_{1}\begin{pmatrix}\mathbb I_{8} & 0\end{pmatrix}D_{2}$,
respectively. In our case these matrices are
\[
C_{1}=\left(\begin{smallmatrix}{\color{lightgray}0} & {\color{lightgray}0} & -1 & {\color{lightgray}0} & {\color{lightgray}0} & {\color{lightgray}0} & {\color{lightgray}0} & {\color{lightgray}0} & {\color{lightgray}0} & {\color{lightgray}0} & {\color{lightgray}0} & {\color{lightgray}0} & {\color{lightgray}0} & {\color{lightgray}0}\\
{\color{lightgray}0} & {\color{lightgray}0} & {\color{lightgray}0} & -1 & {\color{lightgray}0} & {\color{lightgray}0} & {\color{lightgray}0} & {\color{lightgray}0} & {\color{lightgray}0} & {\color{lightgray}0} & {\color{lightgray}0} & {\color{lightgray}0} & {\color{lightgray}0} & {\color{lightgray}0}\\
{\color{lightgray}0} & {\color{lightgray}0} & {\color{lightgray}0} & {\color{lightgray}0} & -1 & {\color{lightgray}0} & {\color{lightgray}0} & {\color{lightgray}0} & {\color{lightgray}0} & {\color{lightgray}0} & {\color{lightgray}0} & {\color{lightgray}0} & {\color{lightgray}0} & {\color{lightgray}0}\\
-1 & {\color{lightgray}0} & {\color{lightgray}0} & {\color{lightgray}0} & {\color{lightgray}0} & {\color{lightgray}0} & {\color{lightgray}0} & {\color{lightgray}0} & {\color{lightgray}0} & {\color{lightgray}0} & {\color{lightgray}0} & {\color{lightgray}0} & 1 & {\color{lightgray}0}\\
-1 & {\color{lightgray}0} & {\color{lightgray}0} & {\color{lightgray}0} & {\color{lightgray}0} & {\color{lightgray}0} & {\color{lightgray}0} & {\color{lightgray}0} & {\color{lightgray}0} & {\color{lightgray}0} & {\color{lightgray}0} & {\color{lightgray}0} & {\color{lightgray}0} & 1\\
{\color{lightgray}0} & {\color{lightgray}0} & {\color{lightgray}0} & {\color{lightgray}0} & {\color{lightgray}0} & 1 & {\color{lightgray}0} & {\color{lightgray}0} & {\color{lightgray}0} & {\color{lightgray}0} & {\color{lightgray}0} & {\color{lightgray}0} & {\color{lightgray}0} & {\color{lightgray}0}\\
{\color{lightgray}0} & {\color{lightgray}0} & {\color{lightgray}0} & {\color{lightgray}0} & {\color{lightgray}0} & {\color{lightgray}0} & 1 & {\color{lightgray}0} & {\color{lightgray}0} & {\color{lightgray}0} & {\color{lightgray}0} & {\color{lightgray}0} & {\color{lightgray}0} & {\color{lightgray}0}\\
-1 & {\color{lightgray}0} & {\color{lightgray}0} & {\color{lightgray}0} & {\color{lightgray}0} & {\color{lightgray}0} & {\color{lightgray}0} & {\color{lightgray}0} & {\color{lightgray}0} & {\color{lightgray}0} & {\color{lightgray}0} & {\color{lightgray}0} & {\color{lightgray}0} & {\color{lightgray}0}\\
{\color{lightgray}0} & -1 & {\color{lightgray}0} & {\color{lightgray}0} & {\color{lightgray}0} & {\color{lightgray}0} & {\color{lightgray}0} & {\color{lightgray}0} & {\color{lightgray}0} & {\color{lightgray}0} & {\color{lightgray}0} & {\color{lightgray}0} & {\color{lightgray}0} & {\color{lightgray}0}\\
{\color{lightgray}0} & {\color{lightgray}0} & {\color{lightgray}0} & {\color{lightgray}0} & {\color{lightgray}0} & {\color{lightgray}0} & {\color{lightgray}0} & -1 & {\color{lightgray}0} & {\color{lightgray}0} & {\color{lightgray}0} & {\color{lightgray}0} & {\color{lightgray}0} & {\color{lightgray}0}\\
{\color{lightgray}0} & {\color{lightgray}0} & {\color{lightgray}0} & {\color{lightgray}0} & {\color{lightgray}0} & {\color{lightgray}0} & {\color{lightgray}0} & {\color{lightgray}0} & -1 & {\color{lightgray}0} & {\color{lightgray}0} & {\color{lightgray}0} & {\color{lightgray}0} & {\color{lightgray}0}\\
{\color{lightgray}0} & {\color{lightgray}0} & {\color{lightgray}0} & {\color{lightgray}0} & {\color{lightgray}0} & {\color{lightgray}0} & {\color{lightgray}0} & {\color{lightgray}0} & {\color{lightgray}0} & -1 & {\color{lightgray}0} & {\color{lightgray}0} & {\color{lightgray}0} & {\color{lightgray}0}\\
{\color{lightgray}0} & {\color{lightgray}0} & {\color{lightgray}0} & {\color{lightgray}0} & {\color{lightgray}0} & {\color{lightgray}0} & {\color{lightgray}0} & {\color{lightgray}0} & {\color{lightgray}0} & {\color{lightgray}0} & -1 & {\color{lightgray}0} & {\color{lightgray}0} & {\color{lightgray}0}\\
{\color{lightgray}0} & {\color{lightgray}0} & {\color{lightgray}0} & {\color{lightgray}0} & {\color{lightgray}0} & {\color{lightgray}0} & {\color{lightgray}0} & {\color{lightgray}0} & {\color{lightgray}0} & {\color{lightgray}0} & {\color{lightgray}0} & -1 & {\color{lightgray}0} & {\color{lightgray}0}
\end{smallmatrix}\right),\; C_{2}=\left(\begin{smallmatrix}{\color{lightgray}0} & {\color{lightgray}0} & {\color{lightgray}0} & {\color{lightgray}0} & -1 & {\color{lightgray}0} & {\color{lightgray}0} & {\color{lightgray}0} & {\color{lightgray}0} & {\color{lightgray}0} & {\color{lightgray}0} & {\color{lightgray}0}\\
-1 & {\color{lightgray}0} & {\color{lightgray}0} & {\color{lightgray}0} & {\color{lightgray}0} & {\color{lightgray}0} & {\color{lightgray}0} & {\color{lightgray}0} & {\color{lightgray}0} & {\color{lightgray}0} & {\color{lightgray}0} & {\color{lightgray}0}\\
{\color{lightgray}0} & -1 & {\color{lightgray}0} & {\color{lightgray}0} & {\color{lightgray}0} & {\color{lightgray}0} & {\color{lightgray}0} & {\color{lightgray}0} & {\color{lightgray}0} & {\color{lightgray}0} & {\color{lightgray}0} & {\color{lightgray}0}\\
{\color{lightgray}0} & {\color{lightgray}0} & -1 & {\color{lightgray}0} & {\color{lightgray}0} & {\color{lightgray}0} & {\color{lightgray}0} & {\color{lightgray}0} & {\color{lightgray}0} & {\color{lightgray}0} & {\color{lightgray}0} & {\color{lightgray}0}\\
{\color{lightgray}0} & {\color{lightgray}0} & {\color{lightgray}0} & -1 & {\color{lightgray}0} & {\color{lightgray}0} & {\color{lightgray}0} & {\color{lightgray}0} & {\color{lightgray}0} & {\color{lightgray}0} & {\color{lightgray}0} & {\color{lightgray}0}\\
{\color{lightgray}0} & {\color{lightgray}0} & {\color{lightgray}0} & {\color{lightgray}0} & {\color{lightgray}0} & 1 & {\color{lightgray}0} & {\color{lightgray}0} & {\color{lightgray}0} & {\color{lightgray}0} & {\color{lightgray}0} & {\color{lightgray}0}\\
{\color{lightgray}0} & {\color{lightgray}0} & {\color{lightgray}0} & {\color{lightgray}0} & {\color{lightgray}0} & {\color{lightgray}0} & 1 & {\color{lightgray}0} & {\color{lightgray}0} & {\color{lightgray}0} & {\color{lightgray}0} & {\color{lightgray}0}\\
{\color{lightgray}0} & {\color{lightgray}0} & {\color{lightgray}0} & {\color{lightgray}0} & {\color{lightgray}0} & {\color{lightgray}0} & {\color{lightgray}0} & -1 & {\color{lightgray}0} & {\color{lightgray}0} & {\color{lightgray}0} & {\color{lightgray}0}\\
{\color{lightgray}0} & {\color{lightgray}0} & {\color{lightgray}0} & {\color{lightgray}0} & {\color{lightgray}0} & {\color{lightgray}0} & {\color{lightgray}0} & {\color{lightgray}0} & -1 & {\color{lightgray}0} & {\color{lightgray}0} & {\color{lightgray}0}\\
{\color{lightgray}0} & {\color{lightgray}0} & {\color{lightgray}0} & {\color{lightgray}0} & {\color{lightgray}0} & {\color{lightgray}0} & {\color{lightgray}0} & {\color{lightgray}0} & {\color{lightgray}0} & -1 & {\color{lightgray}0} & {\color{lightgray}0}\\
{\color{lightgray}0} & {\color{lightgray}0} & {\color{lightgray}0} & {\color{lightgray}0} & {\color{lightgray}0} & {\color{lightgray}0} & {\color{lightgray}0} & {\color{lightgray}0} & {\color{lightgray}0} & {\color{lightgray}0} & -1 & {\color{lightgray}0}\\
{\color{lightgray}0} & {\color{lightgray}0} & {\color{lightgray}0} & {\color{lightgray}0} & {\color{lightgray}0} & {\color{lightgray}0} & {\color{lightgray}0} & {\color{lightgray}0} & {\color{lightgray}0} & {\color{lightgray}0} & {\color{lightgray}0} & -1
\end{smallmatrix}\right),
\]

\[
D_{1}=\mathbb I_{8},\qquad\mbox{and}\qquad D_{2}=\left(\begin{smallmatrix}1 & {\color{lightgray}0} & {\color{lightgray}0} & {\color{lightgray}0} & {\color{lightgray}0} & {\color{lightgray}0} & {\color{lightgray}0} & {\color{lightgray}0} & {\color{lightgray}0} & {\color{lightgray}0}\\
{\color{lightgray}0} & 1 & {\color{lightgray}0} & {\color{lightgray}0} & {\color{lightgray}0} & {\color{lightgray}0} & {\color{lightgray}0} & {\color{lightgray}0} & {\color{lightgray}0} & {\color{lightgray}0}\\
{\color{lightgray}0} & {\color{lightgray}0} & 1 & {\color{lightgray}0} & {\color{lightgray}0} & {\color{lightgray}0} & {\color{lightgray}0} & {\color{lightgray}0} & {\color{lightgray}0} & {\color{lightgray}0}\\
{\color{lightgray}0} & {\color{lightgray}0} & {\color{lightgray}0} & 1 & {\color{lightgray}0} & {\color{lightgray}0} & {\color{lightgray}0} & {\color{lightgray}0} & {\color{lightgray}0} & {\color{lightgray}0}\\
{\color{lightgray}0} & {\color{lightgray}0} & {\color{lightgray}0} & {\color{lightgray}0} & 1 & {\color{lightgray}0} & {\color{lightgray}0} & {\color{lightgray}0} & {\color{lightgray}0} & {\color{lightgray}0}\\
{\color{lightgray}0} & {\color{lightgray}0} & {\color{lightgray}0} & {\color{lightgray}0} & {\color{lightgray}0} & {\color{lightgray}0} & {\color{lightgray}0} & 1 & {\color{lightgray}0} & {\color{lightgray}0}\\
{\color{lightgray}0} & {\color{lightgray}0} & {\color{lightgray}0} & {\color{lightgray}0} & {\color{lightgray}0} & {\color{lightgray}0} & {\color{lightgray}0} & {\color{lightgray}0} & 1 & {\color{lightgray}0}\\
{\color{lightgray}0} & {\color{lightgray}0} & {\color{lightgray}0} & {\color{lightgray}0} & {\color{lightgray}0} & {\color{lightgray}0} & {\color{lightgray}0} & {\color{lightgray}0} & {\color{lightgray}0} & 1\\
{\color{lightgray}0} & {\color{lightgray}0} & {\color{lightgray}0} & {\color{lightgray}0} & {\color{lightgray}0} & 1 & {\color{lightgray}0} & {\color{lightgray}0} & {\color{lightgray}0} & {\color{lightgray}0}\\
{\color{lightgray}0} & {\color{lightgray}0} & {\color{lightgray}0} & {\color{lightgray}0} & {\color{lightgray}0} & {\color{lightgray}0} & 1 & {\color{lightgray}0} & {\color{lightgray}0} & {\color{lightgray}0}
\end{smallmatrix}\right).
\]
 Using these matrices we can write:
\begin{eqnarray*}
A^{\prime}+\lambda B^{\prime} & = & F_{2}^{-1}F_{2}(A^{\prime}+\lambda B^{\prime})=F_{2}^{-1}(L_{1}+\lambda L_{2})F_{1}\\
 & = & F_{2}^{-1}(L_{1}+\lambda L_{2})G_{1}G_{1}^{-1}F_{1}=F_{2}^{-1}G_{2}(A+\lambda B)G_{1}^{-1}F_{1}\\
 & = & D_{1}\begin{pmatrix}\mathbb I_{8} & 0\end{pmatrix}D_{2}(A+\lambda B)C_{1}\begin{pmatrix}\mathbb I_{12}\\
0
\end{pmatrix}C_{2}.
\end{eqnarray*}
 So $D_{1}^{-1}(A^{\prime}+\lambda B^{\prime})C_{2}^{-1}=\begin{pmatrix}\mathbb I_{8} & 0\end{pmatrix}D_{2}(A+\lambda B)C_{1}\begin{pmatrix}\mathbb I_{12}\\
0
\end{pmatrix}$, hence
\begin{eqnarray*}
A^{\prime}+\lambda B^{\prime} & = & \begin{pmatrix}\mathbb I_{8} & 0\end{pmatrix}\begin{pmatrix}D_{1}\\
 & \mathbb I_{2}
\end{pmatrix}D_{2}(A+\lambda B)C_{1}\begin{pmatrix}C_{2}\\
 & \mathbb I_{2}
\end{pmatrix}\begin{pmatrix}\mathbb I_{12}\\
0
\end{pmatrix}\\
 & = & \begin{pmatrix}\mathbb I_{8} & 0\end{pmatrix}D_{2}(A+\lambda B)C^{\prime}\begin{pmatrix}\mathbb I_{12}\\
0
\end{pmatrix},
\end{eqnarray*}
 where
\[
C^{\prime}=\left(\begin{smallmatrix}{\color{lightgray}0} & 1 & {\color{lightgray}0} & {\color{lightgray}0} & {\color{lightgray}0} & {\color{lightgray}0} & {\color{lightgray}0} & {\color{lightgray}0} & {\color{lightgray}0} & {\color{lightgray}0} & {\color{lightgray}0} & {\color{lightgray}0} & {\color{lightgray}0} & {\color{lightgray}0}\\
{\color{lightgray}0} & {\color{lightgray}0} & 1 & {\color{lightgray}0} & {\color{lightgray}0} & {\color{lightgray}0} & {\color{lightgray}0} & {\color{lightgray}0} & {\color{lightgray}0} & {\color{lightgray}0} & {\color{lightgray}0} & {\color{lightgray}0} & {\color{lightgray}0} & {\color{lightgray}0}\\
{\color{lightgray}0} & {\color{lightgray}0} & {\color{lightgray}0} & 1 & {\color{lightgray}0} & {\color{lightgray}0} & {\color{lightgray}0} & {\color{lightgray}0} & {\color{lightgray}0} & {\color{lightgray}0} & {\color{lightgray}0} & {\color{lightgray}0} & {\color{lightgray}0} & {\color{lightgray}0}\\
{\color{lightgray}0} & {\color{lightgray}0} & {\color{lightgray}0} & {\color{lightgray}0} & 1 & {\color{lightgray}0} & {\color{lightgray}0} & {\color{lightgray}0} & {\color{lightgray}0} & {\color{lightgray}0} & {\color{lightgray}0} & {\color{lightgray}0} & 1 & {\color{lightgray}0}\\
{\color{lightgray}0} & {\color{lightgray}0} & {\color{lightgray}0} & {\color{lightgray}0} & 1 & {\color{lightgray}0} & {\color{lightgray}0} & {\color{lightgray}0} & {\color{lightgray}0} & {\color{lightgray}0} & {\color{lightgray}0} & {\color{lightgray}0} & {\color{lightgray}0} & 1\\
{\color{lightgray}0} & {\color{lightgray}0} & {\color{lightgray}0} & {\color{lightgray}0} & {\color{lightgray}0} & 1 & {\color{lightgray}0} & {\color{lightgray}0} & {\color{lightgray}0} & {\color{lightgray}0} & {\color{lightgray}0} & {\color{lightgray}0} & {\color{lightgray}0} & {\color{lightgray}0}\\
{\color{lightgray}0} & {\color{lightgray}0} & {\color{lightgray}0} & {\color{lightgray}0} & {\color{lightgray}0} & {\color{lightgray}0} & 1 & {\color{lightgray}0} & {\color{lightgray}0} & {\color{lightgray}0} & {\color{lightgray}0} & {\color{lightgray}0} & {\color{lightgray}0} & {\color{lightgray}0}\\
{\color{lightgray}0} & {\color{lightgray}0} & {\color{lightgray}0} & {\color{lightgray}0} & 1 & {\color{lightgray}0} & {\color{lightgray}0} & {\color{lightgray}0} & {\color{lightgray}0} & {\color{lightgray}0} & {\color{lightgray}0} & {\color{lightgray}0} & {\color{lightgray}0} & {\color{lightgray}0}\\
1 & {\color{lightgray}0} & {\color{lightgray}0} & {\color{lightgray}0} & {\color{lightgray}0} & {\color{lightgray}0} & {\color{lightgray}0} & {\color{lightgray}0} & {\color{lightgray}0} & {\color{lightgray}0} & {\color{lightgray}0} & {\color{lightgray}0} & {\color{lightgray}0} & {\color{lightgray}0}\\
{\color{lightgray}0} & {\color{lightgray}0} & {\color{lightgray}0} & {\color{lightgray}0} & {\color{lightgray}0} & {\color{lightgray}0} & {\color{lightgray}0} & 1 & {\color{lightgray}0} & {\color{lightgray}0} & {\color{lightgray}0} & {\color{lightgray}0} & {\color{lightgray}0} & {\color{lightgray}0}\\
{\color{lightgray}0} & {\color{lightgray}0} & {\color{lightgray}0} & {\color{lightgray}0} & {\color{lightgray}0} & {\color{lightgray}0} & {\color{lightgray}0} & {\color{lightgray}0} & 1 & {\color{lightgray}0} & {\color{lightgray}0} & {\color{lightgray}0} & {\color{lightgray}0} & {\color{lightgray}0}\\
{\color{lightgray}0} & {\color{lightgray}0} & {\color{lightgray}0} & {\color{lightgray}0} & {\color{lightgray}0} & {\color{lightgray}0} & {\color{lightgray}0} & {\color{lightgray}0} & {\color{lightgray}0} & 1 & {\color{lightgray}0} & {\color{lightgray}0} & {\color{lightgray}0} & {\color{lightgray}0}\\
{\color{lightgray}0} & {\color{lightgray}0} & {\color{lightgray}0} & {\color{lightgray}0} & {\color{lightgray}0} & {\color{lightgray}0} & {\color{lightgray}0} & {\color{lightgray}0} & {\color{lightgray}0} & {\color{lightgray}0} & 1 & {\color{lightgray}0} & {\color{lightgray}0} & {\color{lightgray}0}\\
{\color{lightgray}0} & {\color{lightgray}0} & {\color{lightgray}0} & {\color{lightgray}0} & {\color{lightgray}0} & {\color{lightgray}0} & {\color{lightgray}0} & {\color{lightgray}0} & {\color{lightgray}0} & {\color{lightgray}0} & {\color{lightgray}0} & 1 & {\color{lightgray}0} & {\color{lightgray}0}
\end{smallmatrix}\right).
\]
 Obviously, $A+\lambda B\sim D_{2}(A+\lambda B)C^{\prime}$, where
\[
D_{2}(A+\lambda B)C^{\prime}=\left(\begin{smallmatrix}0 & \lambda & 1 & 0 & 0 & 0 & 0 &  &  &  &  &  & {\color{lightgray}0} & {\color{lightgray}0}\\
 & 0 & \lambda & 1 & 0 & 0 & 0 &  &  &  &  &  & {\color{lightgray}0} & {\color{lightgray}0}\\
 & 0 & 0 & \lambda & 1 & 0 & 0 &  &  &  &  &  & 1 & {\color{lightgray}0}\\
 & 0 & 0 & 0 & \lambda & 1 & 0 &  &  &  &  &  & {\color{lightgray}0} & \lambda\\
 & 0 & 0 & 0 & 0 & \lambda & 1 &  &  &  &  &  & {\color{lightgray}0} & {\color{lightgray}0}\\
 &  &  &  &  &  &  & \lambda & 1 & 0 &  &  & {\color{lightgray}0} & {\color{lightgray}0}\\
 &  &  &  &  &  &  & 0 & \lambda & 1 &  &  & {\color{lightgray}0} & {\color{lightgray}0}\\
 &  &  &  &  &  &  &  &  &  & \lambda & 1 & {\color{lightgray}0} & {\color{lightgray}0}\\
{\color{lightgray}0} & {\color{lightgray}0} & {\color{lightgray}0} & {\color{lightgray}0} & 1 & {\color{lightgray}0} & \lambda & {\color{lightgray}0} & {\color{lightgray}0} & {\color{lightgray}0} & {\color{lightgray}0} & {\color{lightgray}0} & {\color{lightgray}0} & {\color{lightgray}0}\\
\lambda & {\color{lightgray}0} & {\color{lightgray}0} & {\color{lightgray}0} & {\color{lightgray}0} & {\color{lightgray}0} & {\color{lightgray}0} & 1 & {\color{lightgray}0} & {\color{lightgray}0} & {\color{lightgray}0} & {\color{lightgray}0} & {\color{lightgray}0} & {\color{lightgray}0}
\end{smallmatrix}\right)=\left(\begin{array}{cc}
A^{\prime}+\lambda B^{\prime} & A_{12}+\lambda B_{12}\\
A_{21}+\lambda B_{21} & A_{22}+\lambda B_{22}
\end{array}\right),
\]
 with the completion pencils
\[
A_{12}+\lambda B_{12}=\left(\begin{smallmatrix}{\color{lightgray}0} & {\color{lightgray}0}\\
{\color{lightgray}0} & {\color{lightgray}0}\\
1 & {\color{lightgray}0}\\
{\color{lightgray}0} & \lambda\\
{\color{lightgray}0} & {\color{lightgray}0}\\
{\color{lightgray}0} & {\color{lightgray}0}\\
{\color{lightgray}0} & {\color{lightgray}0}\\
{\color{lightgray}0} & {\color{lightgray}0}
\end{smallmatrix}\right),\quad A_{21}+\lambda B_{21}=\left(\begin{smallmatrix}{\color{lightgray}0} & {\color{lightgray}0} & {\color{lightgray}0} & {\color{lightgray}0} & 1 & {\color{lightgray}0} & \lambda & {\color{lightgray}0} & {\color{lightgray}0} & {\color{lightgray}0} & {\color{lightgray}0} & {\color{lightgray}0}\\
\lambda & {\color{lightgray}0} & {\color{lightgray}0} & {\color{lightgray}0} & {\color{lightgray}0} & {\color{lightgray}0} & {\color{lightgray}0} & 1 & {\color{lightgray}0} & {\color{lightgray}0} & {\color{lightgray}0} & {\color{lightgray}0}
\end{smallmatrix}\right),\quad A_{22}+\lambda B_{22}=\left(\begin{smallmatrix}{\color{lightgray}0} & {\color{lightgray}0}\\
{\color{lightgray}0} & {\color{lightgray}0}
\end{smallmatrix}\right).
\]
\end{example}
\begin{rem}
The calculations were verified using the computer algebra system Maxima
\cite{MAX}.\end{rem}
\bigskip

{\it Acknowledgements.} This work was supported by the Bolyai Scholarship of the Hungarian Academy of Sciences and Grant PN
II-RU-TE-2009-1-ID 303.

\end{document}